\newtheorem{theorem}{Theorem}[section]
\newtheorem{corollary}[theorem]{Corollary}
\newtheorem{proposition}[theorem]{Proposition}
\newtheorem{prob}[theorem]{Problem}
\newtheorem{conj}[theorem]{Conjecture}
\theoremstyle{ex}
\theoremstyle{definition}
\theoremstyle{remark}
\newtheorem{remark}[theorem]{Remark}
\newcommand{\R}{\mathbb{R}}
\journal{ADAM}
\begin{document}
\begin{frontmatter}
\title{Open problems in the spectral theory of signed graphs}
\author[1]{Francesco Belardo\corref{cor1}}
\ead{fbelardo@unina.it}
\cortext[cor1]{Corresponding author}
\author[2]{Sebastian M. Cioab\u{a}}
\ead{cioaba@udel.edu}
\author[3]{Jack Koolen}
\ead{koolen@ustc.edu.cn}
\author[4]{Jianfeng Wang}
\ead{jfwang@sdut.edu.cn}

\address[1]{\scriptsize Department of Mathematics and Applications, University of Naples Federico II, I-80126 Naples, Italy}
\address[2]{\scriptsize Department of Mathematical Sciences, University of Delaware, USA}
\address[3]{\scriptsize School of Mathematical Sciences, University of Science and Technology of China, Wen-Tsun Wu Key Laboratory of the Chinese Academy of Sciences, Anhui, 230026, China}
\address[4]{\scriptsize School of Mathematics and Statistics, Shandong University of Technology, Zibo, China}

\date{\today}
%%% ----------------------------------------------------------------------

\begin{abstract}
Signed graphs are graphs whose edges get a sign $+1$ or $-1$ (the signature). Signed graphs can be studied by means of graph matrices extended to signed graphs in a natural way. Recently, the spectra of signed graphs have attracted much attention from graph spectra specialists. One motivation is that the spectral theory of signed graphs elegantly generalizes the spectral theories of unsigned graphs. On the other hand, unsigned graphs do not disappear completely, since their role can be taken by the special case of balanced signed graphs.

Therefore, spectral problems defined and studied for unsigned graphs can be considered in terms of signed graphs, and sometimes such generalization shows nice properties which cannot be appreciated in terms of (unsigned) graphs. Here, we survey some general results on the adjacency spectra of signed graphs, and we consider some spectral problems which are inspired from the spectral theory of (unsigned) graphs.\\
\end{abstract}

\begin{keyword} Signed graph, Adjacency matrix, Eigenvalue, Unbalanced Graph.

\MSC[2010] 05C22\sep 05C50.
\end{keyword}

\end{frontmatter}

\section{Introduction}

A \emph{signed graph} $\Gamma=(G,\sigma)$ is a graph $G=(V,E)$, with vertex set $V$ and edge set $E$, together with a function $\sigma:E\rightarrow\{+1,-1\}$ assigning a positive or negative sign to each edge. The (unsigned) graph $G$ is said to be the underlying graph of $\Gamma$, while the function $\sigma$ is called the signature of $\Gamma$. Edge signs are usually interpreted as $\pm1$. In this way, the adjacency matrix $A(\Gamma)$ of $\Gamma$ is naturally defined following that of unsigned graphs, that is by putting $+1$ or $-1$ whenever the corresponding edge is either positive or negative, respectively. One could think about signed graphs as weighted graphs with edges of weights in $\{0,1,-1\}$, however the two theories are very different. In fact, in signed graphs the product of signs has a prominent role, while in weighted graphs it is the sum of weights that is relevant. A walk is positive or negative if the product of corresponding weights is positive or negative, respectively. Since cycles are special kinds of walks, this definition applies to them as well and we have the notions of positive and negative cycles.
\smallskip

Many familiar notions related to unsigned graphs directly extend to signed graphs. For example, the degree $d_v$ of a vertex $v$ in $\Gamma$ is simply its degree in $G$. A vertex of degree one is said to be a pendant vertex. The diameter of $\Gamma=(G,\sigma)$ is the diameter of its underlying graph $G$, namely, the maximum distance between any two vertices in $G$. Some other definitions depend on the signature, for example, the positive (resp.,  negative) degree of a vertex is the number of positive (negative) edges incident to the vertex, or the already mentioned sign of a walk or cycle. A signed graph is {\it balanced} if all its cycles are positive, otherwise it is {\it unbalanced}. Unsigned graphs are treated as (balanced) signed graphs where all edges get a positive sign, that is, the {\em all-positive signature}.

An important feature of signed graphs is the concept of {\em switching} the signature. Given a signed graph $\Gamma=(G,\sigma)$ and a subset $U\subseteq V(G)$, let $\Gamma^U$ be the signed graph obtained from $\Gamma$ by reversing the signs of the edges in the cut $[U,V(G)\setminus U]$, namely $\sigma_{\Gamma^U}(e)=-\sigma_{\Gamma}(e)$ for any edge $e$ between $U$ and $V(G)\setminus U$, and $\sigma_{\Gamma^U}(e)=\sigma_{\Gamma}(e)$ otherwise. The signed graph $\Gamma^U$ is said to be (switching) equivalent to $\Gamma$ and $\sigma_{\Gamma^U}$ to $\sigma_\Gamma$, and we write $\Gamma^U \sim \Gamma$ or $\sigma_{\Gamma^U} \sim \sigma_{\Gamma}$. It is not difficult to see that each cycle in $\Gamma$ maintains its sign after a switching. Hence, $\Gamma^U$ and $\Gamma$ have the same positive and negative cycles. Therefore, the signature is determined up to equivalence by the set of positive cycles (see \cite{zas2}). Signatures equivalent to the all-positive one (the edges get just the positive sign) lead to balanced signed graphs: all cycles are positive. By $\sigma\sim+$ we mean that the signature $\sigma$ is equivalent to the all-positive signature, and the corresponding signed graph is equivalent to its underlying graph. Hence, all signed trees on the same underlying graph are switching equivalent to the all-positive signature. In fact, signs are only relevant in cycles, while the edge signs of bridges are irrelevant.

Note that (unsigned) graph invariants are preserved under switching, but also by vertex permutation, so we can consider the isomorphism class of the underlying graph. If we combine switching equivalence and vertex permutation, we have the more general concept of switching isomorphism of signed graphs. For basic results in the theory of signed graphs, the reader is referred to Zaslavsky \cite{zas2} (see also the dynamic survey \cite{dynamic1}). \smallskip

We next consider matrices associated to signed graphs. For a signed graph $\Gamma=(G,\sigma)$ and a graph matrix $M=M(\Gamma)$, the $M$-polynomial is $\phi_M(\Gamma, x)=\det(xI-M(\Gamma))$. The spectrum of $M$ is called the $M$-spectrum of the signed graph $\Gamma$. Usually, $M$ is the adjacency matrix $A(\Gamma)$ or the Laplacian matrix $L(\Gamma)=D(G)-A(\Gamma)$, but in the literature one can find their normalized variants or other matrices. In the remainder, we shall mostly restrict to $M$ being the adjacency matrix $A(\Gamma)$. The adjacency matrix $A(\Gamma)=(a_{ij})$ is the symmetric $\{0,+1,-1\}$-matrix such that $a_{ij}=\sigma(ij)$ whenever the vertices $i$ and $j$ are adjacent, and $a_{ij}=0$ otherwise. As with unsigned graphs, the Laplacian matrix is defined as $L(\Gamma)=D(G)-A(\Gamma)$, where $D(G)$ is the diagonal matrix of vertices degrees (of the underlying graph $G$). In the sequel we will mostly restrict to the adjacency matrix.

Switching has a matrix counterpart. In fact, let $\Gamma$ and $\Gamma^U$ be two switching equivalent graphs. Consider the matrix $S_U=\textrm{diag}(s_1,s_2,\ldots,s_n)$ such that
\[s_i=\left\{
\begin{array}{ll}
+1,& i\in U;\\
-1, & i\in \Gamma\setminus U.
\end{array}
\right.\]
The matrix $S_U$ is the {\it switching matrix}. It is easy to check that
\[A(\Gamma^U)=S_U\, A(\Gamma)\, S_U, \,\,\,\,\,\,{\rm and}\,\,\,\,\,\,\, L(\Gamma^U)=S_U\, L(\Gamma)\, S_U.\]
Hence, signed graphs from the same switching class share similar graph matrices by means of signature matrices (signature similarity). If we also allow permutation of vertices, we have signed permutation matrices, and we can speak of (switching) isomorphic signed graphs. Switching isomorphic signed graphs are cospectral, and their matrices are signed-permutationally similar. From the eigenspace viewpoint, the eigenvector components are also switched in signs and permuted. Evidently, for each eigenvector, there exists a suitable switching such that all components become nonnegative.

In the sequel, let $\lambda_1(\Gamma)\geq\lambda_2(\Gamma)\geq\cdots\geq\lambda_n(\Gamma)$ denote the eigenvalues of the adjacency matrix $A(\Gamma)$ of the signed graph $\Gamma$ of order $n$; they are all real since $A(\Gamma)$ is a real symmetric matrix. The largest eigenvalue $\lambda_1(\Gamma)$ is sometimes called the \emph{index} of $\Gamma$. If $\Gamma$ contains at least one edge, then $\lambda_1(\Gamma)>0>\lambda_n(\Gamma)$ since the sum of the eigenvalues is $0$. Note that in general, the index $\lambda_1(\Gamma)$ does not equal the spectral radius $\rho(\Gamma)=\max\{|\lambda_i|:1\leq i\leq n\}=\max\{\lambda_1,-\lambda_n\}$ because the Perron--Frobenius Theorem is valid only for the all-positive signature (and those equivalent to it). For example, an all $-1$ signing (all-negative signature) of the complete graph on $n\geq 3$ vertices will have eigenvalues $\lambda_1=\dots =\lambda_{n-1}=1$ and $\lambda_n=-(n-1)$.

We would like to end this introduction by mentioning what may be the first paper on signed graph spectra \cite{zas2}. In that paper, Zaslavsky showed that $0$ appears as an $L$-eigenvalue in connected signed graphs if and only if the signature is equivalent to the all-positive one, that is, $\Gamma$ is a balanced signed graph.

For notation not given here and basic results on graph spectra, the reader is referred to \cite{CvDS,cve}, for some basic results on the spectra of signed graphs, to \cite{zas1}, and for some applications of spectra of signed graphs, to \cite{Gallier}.\smallskip

In Section 2, we survey some important results on graph spectra which are valid in terms of the spectra of signed graphs. In Section 3 we collect some open problems and conjectures which are open at the writing of this note.

\section{What do we lose with signed edges?}

From the matrix viewpoint, when we deal with signed graphs we have symmetric $\{0,1,-1\}$-matrices instead of just symmetric $\{0,1\}$-matrices. Clearly, the results coming from the theory of nonnegative matrices can not be applied directly to signed graphs. Perhaps the most important result that no longer holds for adjacency matrices of signed graphs is the Perron--Frobenius theorem. We saw one instance in the introduction and we will see some other consequences of the absence of Perron--Frobenius in the next section. Also, the loss of non-negativity has other consequences related to counting walks and the diameter of the graph (Theorem \ref{signdiam}). On the other hand, all results based on the symmetry of the matrix will be still valid in the context of signed graphs with suitable modifications. In this section, we briefly describe how some well-known results are (possibly) changed when dealing with matrices of signed graphs.\smallskip

We start with the famous {\em Coefficient Theorem}, also known as {\em Sachs Formula}. This formula, perhaps better than others, describes the connection between the eigenvalues and the combinatorial structure of the signed graph. It was given for unsigned graphs in the 1960s independently by several researchers (with different notation), but possibly first stated by Sachs (cf. \cite[Theorem 1.2]{CvDS} and the subsequent remark). The signed-graph variant can be easily given as follows. An elementary figure is the graph $K_2$ or $C_n$ ($n\geq3$). A basic figure (or linear subgraph, or sesquilinear subgraph) is the disjoint union of elementary figures. If $B$ is a basic figure, then denote by $\mathcal{C}(B)$ the class of cycles in $B$, with $c(B)=|\mathcal{C}(B)|$, and by $p(B)$ the number of components of $B$ and define $\sigma(B)=\prod_{C\in \mathcal{C}(B)}\sigma(C)$. Let $\mathcal{B}_i$ be the set of basic figures on $i$ vertices.
\begin{theorem}[Coefficient Theorem]\label{Coefficient Theorem}
Let $\Gamma$ be a signed graph and let $\phi(\Gamma,x)=\sum_{i=0}^n a_i x^{n-1}$ be its adjacency characteristic polynomial. Then
\begin{equation*}
a_i=\sum_{B\in \mathcal{B}_i}(-1)^{p(B)}2^{c(B)}\sigma(B).
\end{equation*}
\end{theorem}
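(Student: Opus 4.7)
The plan is to derive the formula by expanding $\det(xI-A(\Gamma))$ via the Leibniz permutation sum and then reading off the coefficient of $x^{n-i}$ combinatorially, mimicking the classical proof in the unsigned case with the single modification that each matrix entry $A(\Gamma)_{uv}$ now carries the sign $\sigma(uv)$.

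First I would write
\[
\phi(\Gamma,x)=\det(xI-A(\Gamma))=\sum_{\pi\in S_n}\mathrm{sgn}(\pi)\prod_{v=1}^{n}(xI-A(\Gamma))_{v,\pi(v)}.
\]
A factor $(xI-A(\Gamma))_{v,\pi(v)}$ is nonzero only when $\pi(v)=v$ (contributing $x$) or when $v\pi(v)$ is an edge of $G$ (contributing $-\sigma(v\pi(v))$). Hence for each $\pi$ with nonzero contribution, the set $M$ of non-fixed points of $\pi$ decomposes, via the cycle decomposition of $\pi|_M$, into a disjoint union of edges of $G$ (the $2$-cycles of $\pi$) and cycles of $G$ of length $\geq 3$ (the longer cycles of $\pi$). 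In other words, each nonzero $\pi$ determines, and is determined up to choice of orientation on each long cycle, by a basic figure $B$ on $|M|$ vertices.

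Fixing $i$ and collecting the terms with $|M|=i$ gives the coefficient of $x^{n-i}$, and for each basic figure $B\in\mathcal{B}_i$ I would compute its total contribution as follows. A $K_2$ component on an edge $uv$ corresponds uniquely to the transposition $(u\,v)$; its sign is $-1$ and the matrix factor is $(-\sigma(uv))^2=1$, for a net contribution of $-1$. A cycle component $C$ of length $k\geq 3$ corresponds to two cyclic permutations (the two orientations), each of sign $(-1)^{k-1}$, and each yielding matrix factor $(-1)^k\sigma(C)$; hence each long cycle contributes $2\cdot(-1)^{k-1}(-1)^k\sigma(C)=-2\sigma(C)$. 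Multiplying over the $p(B)-c(B)$ edge components and the $c(B)$ long-cycle components and using $\sigma(B)=\prod_{C\in\mathcal{C}(B)}\sigma(C)$ yields
\[
(-1)^{p(B)-c(B)}(-2)^{c(B)}\sigma(B)=(-1)^{p(B)}2^{c(B)}\sigma(B),
\]
so summing over $B\in\mathcal{B}_i$ gives the stated formula for $a_i$.

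The only real delicate point, and the step I would check most carefully, is the bookkeeping of signs: the interaction between $\mathrm{sgn}(\pi)=(-1)^{k-1}$ for a $k$-cycle in $\pi$, the factor $(-1)^k$ coming from the $k$ entries $-A(\Gamma)_{v,\pi(v)}$, and the two orientations of each long cycle of $G$. Once those signs collapse to a single $(-1)^{p(B)}$ in front, the edge signs combine exactly into $\sigma(B)$, confirming that the classical Sachs argument goes through verbatim with $\sigma(B)$ replacing the trivial all-positive product. No appeal to nonnegativity of $A(\Gamma)$ is required, so nothing about the switch from $\{0,1\}$- to $\{0,\pm1\}$-matrices disturbs the argument.
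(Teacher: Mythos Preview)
Your argument is correct: the Leibniz expansion of $\det(xI-A(\Gamma))$, the identification of non-fixed-point sets with basic figures, and the sign bookkeeping all go through as you wrote, yielding $(-1)^{p(B)}2^{c(B)}\sigma(B)$ for each $B$. Note that the paper does not actually supply a proof of this theorem; it is a survey and simply records the signed-graph version of Sachs' formula, remarking that ``the signed-graph variant can be easily given'' and pointing to \cite[Theorem 1.2]{CvDS} for the unsigned case. Your proof is exactly the classical Sachs-type argument with the one modification (carrying $\sigma(uv)$ on each edge entry) that the paper alludes to, so it is fully in line with what the authors had in mind.
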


Another important connection between the eigenvalues and the combinatorial structure of a signed graph is given by the forthcoming theorem. If we consider unsigned graphs, it is well known that the $k$-th spectral moment gives the number of closed walks of length $k$ (cf. \cite[Theorem 3.1.1]{cve}). Zaslavsky \cite{zas1} observed that a signed variant holds for signed graphs as well, and from his observation we can give the subsequent result.
\begin{theorem}[Spectral Moments]\label{Spectral_moments}
Let $\Gamma$ be a signed graph with eigenvalues $\lambda_1\geq \dots \geq \lambda_n$. If $W^{\pm}_k$ denotes the difference between the number of positive and negative closed walks of length $k$, then
\[W^{\pm}_k=\sum_{i=1}^n\lambda_i^k.\]
\end{theorem}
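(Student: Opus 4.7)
My plan is to mimic the standard unsigned-graph argument, tracking signs through matrix multiplication. The starting point is a combinatorial interpretation of the entries of $A(\Gamma)^k$. By induction on $k$, expanding the matrix product gives
\[
\bigl(A(\Gamma)^k\bigr)_{ij} \;=\; \sum_{v_0=i,\,v_1,\ldots,v_{k-1},\,v_k=j} \prod_{t=0}^{k-1} a_{v_t v_{t+1}},
\]
where the sum ranges over all sequences of vertices such that consecutive pairs are adjacent in $G$. Since $a_{v_t v_{t+1}} = \sigma(v_t v_{t+1})$ on adjacencies and $0$ otherwise, each nonzero summand is exactly the product of signs along a walk of length $k$ from $i$ to $j$, i.e.\ the sign of that walk as defined in the introduction.

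The second step specialises to $i = j$. The diagonal entry $(A(\Gamma)^k)_{ii}$ then equals the signed count of closed walks of length $k$ based at $i$, that is, the number of positive such walks minus the number of negative ones. Summing over $i$ gives
\[
\operatorname{tr}\bigl(A(\Gamma)^k\bigr) \;=\; W_k^{\pm},
\]
since every closed walk of length $k$ contributes exactly once to the diagonal at its starting vertex.

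Finally, I invoke the spectral identity. Because $A(\Gamma)$ is real symmetric, it is orthogonally diagonalisable, so $A(\Gamma)^k$ has eigenvalues $\lambda_1^k,\ldots,\lambda_n^k$ and therefore $\operatorname{tr}(A(\Gamma)^k) = \sum_{i=1}^n \lambda_i^k$. Combining with the previous display yields the claimed formula. The only conceptual point worth checking — and really the only place signs enter nontrivially — is the interpretation in the first step that the product $\prod_t \sigma(v_t v_{t+1})$ along a walk coincides with the sign of the walk as defined combinatorially; this is immediate from the definition but is the step that makes the theorem work for signed graphs.
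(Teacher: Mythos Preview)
Your argument is correct and is exactly the standard one: interpret $(A(\Gamma)^k)_{ij}$ as a signed walk count, take the trace, and equate with $\sum_i \lambda_i^k$ via diagonalisability. The paper does not actually supply a proof of this theorem; it merely states the result, referring to the unsigned analogue \cite[Theorem 3.1.1]{cve} and to Zaslavsky's observation \cite{zas1} that the signed variant holds. Your write-up fills in precisely the details one would expect from those references, so there is nothing to contrast.
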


Next, we recall another famous result for the spectra of graphs, that is, the {\em Cauchy Interlacing Theorem}. Its general form holds for principal submatrices of real symmetric matrices (see \cite[Theorem 1.3.11]{cve}). It is valid in signed graphs without any modification to the formula. For a signed graph $\Gamma=(G,\sigma)$ and a subset of vertices $U$, then $\Gamma-U$ is the signed graph obtained from $\Gamma$ by deleting the vertices in $U$ and the edges incident to them. For $v\in V(G)$, we also write $\Gamma-v$ instead of $\Gamma-\{v\}$. Similar notation applies when deleting subsets of edges.
\begin{theorem}[Interlacing Theorem for Signed Graphs]\label{interlacing vertex}
Let $\Gamma=(G,\sigma)$ be a signed graph. For any vertex $v$ of $\Gamma$,
\[\lambda_1(\Gamma)\geq\lambda_1(\Gamma-v)\geq\lambda_2(\Gamma)\geq\lambda_2(\Gamma-v)\geq\cdots\geq\lambda_{n-1}(\Gamma-v)\geq\lambda_n(\Gamma).\]
\end{theorem}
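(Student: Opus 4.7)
The plan is to reduce the statement to the classical Cauchy interlacing theorem for real symmetric matrices (cited as \cite[Theorem 1.3.11]{cve} in the paper), since the theorem is essentially a translation into graph-theoretic language. The only structural inputs needed are that $A(\Gamma)$ is real symmetric and that $A(\Gamma-v)$ is obtained from $A(\Gamma)$ by deleting one row and the matching column.

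First I would fix a labelling of $V(\Gamma)$ that places $v$ last, so that $A(\Gamma-v)$ is literally the leading $(n-1)\times(n-1)$ principal submatrix of $A(\Gamma)$; symmetry is inherited. Then I would invoke the Courant--Fischer min--max characterization of the $k$-th largest eigenvalue,
\[
\lambda_k(A(\Gamma))=\max_{\dim S=k}\ \min_{\substack{x\in S\\ \|x\|=1}} x^{\top}A(\Gamma)\,x,
\]
together with its dual form as a min--max. Viewing $A(\Gamma-v)$ as the restriction of the quadratic form $x\mapsto x^{\top}A(\Gamma)x$ to the hyperplane $H=\{x\in\R^n:x_v=0\}$, the same formula gives $\lambda_k(A(\Gamma-v))$ as a max over $k$-dimensional subspaces of $H$. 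Since this is a max over a sub-family of the subspaces available to $\lambda_k(A(\Gamma))$, the inequality $\lambda_k(\Gamma)\ge\lambda_k(\Gamma-v)$ is immediate. The companion inequality $\lambda_k(\Gamma-v)\ge\lambda_{k+1}(\Gamma)$ follows symmetrically from the dual min--max expression for $\lambda_{k+1}(\Gamma)$, after observing that every $(n-k)$-dimensional subspace of $H$ is also an $(n-k)$-dimensional subspace of $\R^n$.

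There is essentially no obstacle to overcome: what distinguishes signed graphs from unsigned ones—sign products around cycles, or the failure of Perron--Frobenius observed in the introduction—plays no role here, precisely because Cauchy interlacing depends only on symmetry, not on nonnegativity or on the sign pattern of the entries. The only routine verification is that vertex deletion in $\Gamma$ matches the deletion of the corresponding row and column of $A(\Gamma)$, which is immediate from the definition of the adjacency matrix given earlier.
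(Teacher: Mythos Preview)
Your proposal is correct and matches the paper's approach: the paper does not give a separate proof but simply observes that the general Cauchy interlacing theorem for principal submatrices of real symmetric matrices (cited there as \cite[Theorem 1.3.11]{cve}) applies to $A(\Gamma)$ without modification, which is exactly the reduction you carry out. Your additional Courant--Fischer sketch is a standard proof of that cited result and is sound.
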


In the context of subgraphs, there is another famous result which is valid in the theory of signed graphs. In fact, it is possible to give the characteristic polynomial as a linear combination of vertex- or edge-deleted subgraphs. Such formulas are known as {\em Schwenk's Formulas} (cf. \cite[Theorem 2.3.4]{cve}, see also \cite{belisi}). As above, $\Gamma-v$ ($\Gamma-e$) stands for the signed graph obtained from $\Gamma$ in which the vertex $v$ (resp., edge $e$) is deleted. Also, to make the formulas consistent, we set $ \phi(\emptyset,x)=1$.

\begin{theorem}[Schwenk's Formulas]\label{schwenk}
Let $\Gamma$ be a signed graph and $v$ (resp., $e=uv$) one of its vertices (resp., edges). Then
\begin{eqnarray*}
% \nonumber to remove numbering (before each equation)
\phi(\Gamma,x) &=& x \phi(\Gamma-v,x)-\sum_{u \sim v}\phi(\Gamma-u-v,x)-2\sum_{{C}\in \mathcal{C}_v}\sigma{{(C)}}\phi(\Gamma-{C},x), \\
\phi(\Gamma,x) &=&  \phi(\Gamma-e,x)-\phi(\Gamma-u-v,x)-2\sum_{{C}\in \mathcal{C}_e}\sigma{{(C)}}\phi(\Gamma-{C},x),
\end{eqnarray*}
where $\mathcal{C}_a$ denotes the set of cycles passing through $a$.
\end{theorem}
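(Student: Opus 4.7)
My plan is to deduce both identities from the Coefficient Theorem (Theorem~\ref{Coefficient Theorem}) by refining the Sachs expansion according to the interaction of each basic figure with the distinguished vertex $v$ (respectively edge $e=uv$). Writing $\phi(\Gamma,x)=\sum_{i=0}^n a_i x^{n-i}$ and likewise for the vertex- or edge-deleted subgraphs, it suffices to verify each formula coefficient by coefficient in $x$. Both sides are polynomials of degree $n$ with leading coefficient $1$, so only the coefficients with $i\ge 1$ need checking.

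For the vertex formula, partition $\mathcal{B}_i(\Gamma)$ into three classes according to the role of $v$. \emph{Case (i):} $v\notin V(B)$, so $B\in\mathcal{B}_i(\Gamma-v)$ with the same $p(B)$, $c(B)$, $\sigma(B)$; summing contributes $a_i(\Gamma-v)$, which is the coefficient of $x^{n-i}$ in $x\,\phi(\Gamma-v,x)$ (the extra $x$ absorbs the one-vertex shift). \emph{Case (ii):} $v$ belongs to a $K_2$-component $uv$, so $B=B'\cup\{uv\}$ with $B'\in\mathcal{B}_{i-2}(\Gamma-u-v)$; then $p(B)=p(B')+1$, $c(B)=c(B')$, and $\sigma(B)=\sigma(B')$, producing a factor $-1$ relative to $B'$, and summing over all neighbors $u$ of $v$ recovers $-\sum_{u\sim v}\phi(\Gamma-u-v,x)$. \emph{Case (iii):} $v$ lies on a cycle $C\in\mathcal{C}_v$ of $B$, so $B=B''\cup C$ with $B''\in\mathcal{B}_{i-|V(C)|}(\Gamma-C)$; now $p(B)=p(B'')+1$, $c(B)=c(B'')+1$, and by definition $\sigma(B)=\sigma(B'')\sigma(C)$, producing a factor $-2\sigma(C)$ and yielding the last term of the formula.

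The edge formula follows by an almost identical argument, with the partition of $\mathcal{B}_i(\Gamma)$ made according to whether the fixed edge $e=uv$ belongs to $E(B)$: either $e\notin E(B)$, in which case $B\in\mathcal{B}_i(\Gamma-e)$; or $\{uv\}$ is a $K_2$-component of $B$, contributing the single term $-\phi(\Gamma-u-v,x)$ (no sum over neighbors now, because the edge is fixed); or $e$ lies on a cycle $C\in\mathcal{C}_e$ of $B$, contributing $-2\sum_{C\in\mathcal{C}_e}\sigma(C)\phi(\Gamma-C,x)$ by the same parity and signature bookkeeping as above.

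The only step where the signed-graph setting genuinely intervenes beyond the classical unsigned argument is the identity $\sigma(B)=\sigma(B\setminus C)\,\sigma(C)$ used when removing a cycle $C$ from $B$; this is immediate from the definition $\sigma(B)=\prod_{C'\in\mathcal{C}(B)}\sigma(C')$, since $\mathcal{C}(B)$ decomposes as the disjoint union of $\{C\}$ and $\mathcal{C}(B\setminus C)$. I expect the only real obstacle to be the combinatorial bookkeeping: keeping simultaneous track of the shifts in $p$, $c$, $\sigma$, and the power of $x$ across the three cases. Once these are aligned, both identities follow at once by matching coefficients.
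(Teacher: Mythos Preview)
The paper states Theorem~\ref{schwenk} without proof, merely citing the classical references, so there is no argument in the paper to compare yours against. Your derivation via the Coefficient Theorem, partitioning $\mathcal{B}_i(\Gamma)$ by the role of $v$ (resp.\ $e$) in each basic figure, is the standard route to Schwenk's formulas and carries over to the signed setting exactly as you indicate: the only genuinely new ingredient is the multiplicativity $\sigma(B)=\sigma(B\setminus C)\,\sigma(C)$, which you isolate correctly. The bookkeeping in all three cases (shift in $p$, $c$, $\sigma$, and the power of $x$) is accurate, and the implicit bijections between the case-(ii) and case-(iii) summands and the Sachs expansions of the corresponding deleted subgraphs are clean because in any basic figure a vertex lies in at most one component. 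Your proof is correct.
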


Finally, a natural question is the following: if we fix the underlying graph, how much can the eigenvalues change when changing the signature? Given a graph with cyclomatic number $\xi$, then there are at most $2^{\xi}$ nonequivalent signatures as for each independent cycle one can assign either a positive or a negative sign. However, among the $2^{\xi}$ signatures, some of them might lead to switching isomorphic graphs, as we see later. In general, the eigenvalues coming from each signature cannot exceed in modulus the spectral radius of the underlying graph, as is shown in the last theorem of this section.

\begin{theorem}[Eigenvalue Spread]\label{spread}
For a signed graph $\Gamma=(G,\sigma)$, let $\rho(\Gamma)$ be its spectral radius. Then $\rho(\Gamma)\leq \rho(G)$.
\end{theorem}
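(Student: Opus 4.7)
The plan is to use the variational (Rayleigh quotient) characterization of eigenvalues of symmetric matrices together with the triangle inequality applied entrywise, reducing the signed case to the unsigned case where Perron--Frobenius is available.

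First I would pick a real unit eigenvector $\mathbf{x}=(x_1,\ldots,x_n)^\top$ of $A(\Gamma)$ with eigenvalue $\lambda$ satisfying $|\lambda|=\rho(\Gamma)$, which exists because $A(\Gamma)$ is real symmetric. Then
\[
\rho(\Gamma)=|\lambda|=\bigl|\mathbf{x}^\top A(\Gamma)\mathbf{x}\bigr|=\Bigl|\sum_{ij\in E(G)} 2\sigma(ij)\,x_i x_j\Bigr|.
\]
Applying the triangle inequality and using $|\sigma(ij)|=1$, this is at most $\sum_{ij\in E(G)} 2|x_i||x_j|$, which is exactly $|\mathbf{x}|^\top A(G)\,|\mathbf{x}|$, where $|\mathbf{x}|=(|x_1|,\ldots,|x_n|)^\top$ and $A(G)$ is the (nonnegative) adjacency matrix of the underlying graph.

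Next I would invoke the Rayleigh quotient inequality for the symmetric matrix $A(G)$: for any vector $\mathbf{y}$, $\mathbf{y}^\top A(G)\mathbf{y}\leq \lambda_1(G)\,\mathbf{y}^\top\mathbf{y}$. Since $\mathbf{x}$ has unit norm we also have $|\mathbf{x}|^\top|\mathbf{x}|=1$, so
\[
\rho(\Gamma)\leq |\mathbf{x}|^\top A(G)\,|\mathbf{x}|\leq \lambda_1(G).
\]
Finally, because $A(G)$ is entrywise nonnegative, the Perron--Frobenius theorem gives $\lambda_1(G)=\rho(G)$, completing the bound $\rho(\Gamma)\leq\rho(G)$.

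There is essentially no serious obstacle; the only thing to be careful about is the direction of the inequality and the fact that although $A(\Gamma)$ may have negative entries (so Perron--Frobenius does not apply to $\Gamma$ directly), the triangle inequality in the quadratic form legitimately transfers the estimate to the all-positive underlying matrix $A(G)$, where Perron--Frobenius is available. This same argument also shows when equality can occur: up to switching, it forces $|\mathbf{x}|$ to be a Perron eigenvector of $G$ and all terms $\sigma(ij)x_ix_j$ to have the same sign on the support, which is a mild hint at a rigidity statement one could add as a remark.
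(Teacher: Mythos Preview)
Your argument is correct and is essentially the same as the paper's: both pass from $A(\Gamma)$ to $A(G)$ via the entrywise absolute-value/triangle inequality and then bound by the Rayleigh quotient. The only cosmetic difference is that the paper treats $\lambda_1(\Gamma)$ and $\lambda_n(\Gamma)$ in two separate inequalities, whereas you handle both at once by working with $|\lambda|=|\mathbf{x}^\top A(\Gamma)\mathbf{x}|$; the underlying idea is identical.
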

\begin{proof}
Clearly, $\rho(\Gamma)$ equals $\lambda_1(\Gamma)$ or $-\lambda_n(\Gamma)$. Let $A$ be the adjacency matrix of $(G,+)$, and $A_\sigma$ be the adjacency matrix of $\Gamma=(G,\sigma)$. For a vector $X=(x_1,\ldots,x_n)^T$, let $|X|=(|x_1|,\ldots,|x_n|)^T$.

If $X$ is a unit eigenvector corresponding to $\lambda_1({A_\sigma})$, by the Rayleigh quotient we get
\[
\lambda_1(G,\sigma)=X^TA_\sigma X\leq |X|^TA|X|\leq \max_{z:z^Tz=1}z^TAz=\lambda_1(G,+).
\]
Similarly, if $X$ is a unit eigenvector corresponding to the least eigenvalue $\lambda_n({A_\sigma})$, by the Rayleigh quotient we get
\[
\lambda_n(G,\sigma)=X^TA_\sigma X\geq |X|^T(-A)|{X}|\geq \min_{z:z^Tz=1}z^T(-A)z=\lambda_n(G,-)=-\lambda_1(G,+).
\]
By gluing together the two inequalities, we get the assertion. \end{proof}

It is evident from the preceding results that the spectral theory of signed graphs well encapsulates and extends the spectral theory of unsigned graphs. Perhaps, we can say that adding signs to the edges just gives more variety to the spectral theory of graphs. This fact was already observed with the Laplacian of signed graphs, which nicely generalizes the results coming from the Laplacian and signless Laplacian theories of unsigned graphs. It is worth mentioning that thanks to the spectral theory it was possible to give matrix-wise definitions of the signed graph products \cite{gehaza}, line graphs \cite{besi,zas1} and subdivision graphs \cite{besi}.

\section{Some open problems and conjectures}
In this section we consider some open problems and conjectures which are inspired from the corresponding results in the spectral theory of unsigned graphs. We begin with the intriguing concept of ``sign-symmetric graph'' which is a natural signed generalization of the concept of bipartite graph.

\subsection{Symmetric spectrum and sign-symmetric graphs}

One of the most celebrated results in the adjacency spectral theory of (unsigned) graphs is the following.
\begin{theorem}\label{least_eigenvalue}
\begin{enumerate}
\item A graph is bipartite if and only if its adjacency spectrum is symmetric with respect to the origin.
\item A connected graph is bipartite if and only if its smallest eigenvalue equals the negative of its spectral radius.
\end{enumerate}
\end{theorem}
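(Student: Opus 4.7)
For part (1), I would prove both directions using tools from Section~2. If $G$ is bipartite, then every basic figure in the sense of the Coefficient Theorem (Theorem~\ref{Coefficient Theorem}) decomposes into copies of $K_{2}$ and even cycles, each covering an even number of vertices, so $\mathcal{B}_{i}=\emptyset$ for odd $i$; applied to the all-positive signature this gives $a_{i}=0$ for odd $i$, and hence $\phi(G,-x)=(-1)^{n}\phi(G,x)$, so the spectrum is symmetric about $0$. Conversely, a spectrum symmetric about the origin kills every odd moment $\sum_{i}\lambda_{i}^{2k+1}$. By the Spectral Moments theorem (Theorem~\ref{Spectral_moments}) applied to $\sigma\equiv+$, where every closed walk is positive, there are no closed walks of odd length, so $G$ contains no odd cycles and is bipartite.

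For part (2), the forward direction is immediate: combining (1) with Perron--Frobenius applied to the non-negative matrix $A(G)$ on the connected graph $G$ yields $\lambda_{1}=\rho(G)$, and (1) gives $\lambda_{n}=-\lambda_{1}=-\rho(G)$. For the converse, assume $G$ is connected with $\lambda_{n}=-\rho$, and let $y$ be a unit $\lambda_{n}$-eigenvector. Non-negativity of $A=A(G)$ gives the Rayleigh chain
\[
\rho \;\ge\; |y|^{T}A|y| \;\ge\; \bigl|y^{T}Ay\bigr| \;=\; \rho,
\]
so every inequality is an equality. Thus $|y|$ is a unit Perron vector of the connected graph $G$, hence $|y|>0$ coordinate-wise; and equality $|y|^{T}A|y|=|y^{T}Ay|$ forces all nonzero terms $a_{ij}y_{i}y_{j}$ in that bilinear form to share a single sign, which (since the sum equals $\lambda_{n}<0$) must be negative. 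Hence $y_{i}y_{j}<0$ across every edge, and $U=\{i:y_{i}>0\}$, $W=\{i:y_{i}<0\}$ provides the required bipartition of $V(G)$.

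The step I expect to demand the most care is this converse of (2): the only hypothesis is the numerical identity $\lambda_{n}=-\lambda_{1}$, and one must extract from it a genuine combinatorial bipartition. The key mechanism is upgrading $|y|$ from a mere Rayleigh maximiser to a strictly positive Perron eigenvector using connectedness, which then turns the equality in the Rayleigh chain into a per-edge sign constraint on the coordinates of $y$. Everything else is either routine block-matrix algebra or a direct application of Theorems~\ref{Coefficient Theorem} and~\ref{Spectral_moments}.
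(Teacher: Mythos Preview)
Your argument is correct. Note, however, that the paper does not actually supply a proof of this theorem: it is stated as a classical fact with references to \cite[Section 3.4]{BH}, \cite[Section 8.8]{GR} and \cite[Ch.~31]{VLW}, together with the remark that Perron--Frobenius is not needed for part~(1) but is ``crucial'' for part~(2). Your write-up is fully consistent with that remark --- part~(1) is handled purely via Theorems~\ref{Coefficient Theorem} and~\ref{Spectral_moments}, and the converse of part~(2) is exactly the place where you invoke Perron--Frobenius (to promote $|y|$ to a strictly positive eigenvector on a connected graph) --- and it matches the standard textbook proofs in the cited references. There is nothing to correct; if anything, you could add the trivial disclaimer that for an edgeless $G$ the claim is vacuous, so that $\rho>0$ is available when you argue the sign of the terms $a_{ij}y_iy_j$.
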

For the first part, one does not need Perron--Frobenius theorem. To the best of our knowledge, Perron--Frobenius is crucial for the second part (see \cite[Section 3.4]{BH} or \cite[Section 8.8]{GR} or \cite[Ch. 31]{VLW}).

On the other hand, in the larger context of signed graphs the symmetry of the spectrum is not a privilege of bipartite and balanced graphs. A signed graph $\Gamma=(G,\sigma)$ is said to be {\em sign-symmetric} if $\Gamma$ is switching isomorphic to its negation, that is, $-\Gamma=(G,-\sigma)$. It is not difficult to observe that the signature-reversal changes the sign of odd cycles but leaves unaffected the sign of even cycles. Since bipartite (unsigned) graphs are odd-cycle free, it happens that bipartite graphs are a special case of sign-symmetric signed graphs, or better to say, if a signed graph $\Gamma=(G,\sigma)$ has a bipartite underlying graph $G$, then $\Gamma$ and $-\Gamma$ are switching equivalent. In Fig. \ref{fig1} we depict an example of a sign-symmetric graph. Here and in the remaining pictures as well negative edges are represented by heavy lines and positive edges by thin lines.

\unitlength=1mm
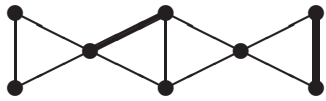
\begin{figure}[h]
  \centering
\begin{picture}(40,17)(,7)
\thicklines
\put(0,10){\circle*{2.0}}\put(0,20){\circle*{2.0}}\put(10,15){\circle*{2.0}}\put(20,10){\circle*{2.0}}
\put(20,20){\circle*{2.0}}\put(30,15){\circle*{2.0}}\put(40,10){\circle*{2.0}}\put(40,20){\circle*{2.0}}
\put(0,10){\line(0,1){10}} \put(10,15){\line(2,-1){10}} \put(10,15){\line(-2,1){10}} \put(10,15){\line(-2,-1){10}} \put(0,10){\line(0,1){10}} \put(30,15){\line(2,1){10}} \put(30,15){\line(2,-1){10}} \put(30,15){\line(-2,-1){10}} \put(30,15){\line(-2,1){10}} \put(20,10){\line(0,1){10}}

\put(40,10){\line(0,1){10}}\put(40.15,10){\line(0,1){10}} \put(39.85,10){\line(0,1){10}} \put(39.65,10){\line(0,1){10}} \put(40.25,10){\line(0,1){10}}
\put(10,15){\line(2,1){10}} \put(10,15.15){\line(2,1){10}} \put(10,14.85){\line(2,1){10}} \put(10,15.3){\line(2,1){10}} \put(10,14.70){\line(2,1){10}}
\end{picture}
\caption{A sign-symmetric signed graph.}
\label{fig1}
\end{figure}

\newpage
If $\Gamma$ is switching isomorphic to $-\Gamma$, then $A$ and $-A$ are similar and we immediately get:
\begin{theorem}
Let $\Gamma$ be a sign-symmetric graph. Then its adjacency spectrum is symmetric with respect to the origin.
\end{theorem}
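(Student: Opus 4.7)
The plan is to unwind the hypothesis into a matrix similarity statement and then read off the conclusion from the elementary fact that similar matrices have identical spectra.

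First, I would translate the definition of sign-symmetry into matrix language. By the discussion of switching in the introduction, any switching equivalence is realized by conjugation by a diagonal sign matrix $S_U$, and vertex permutation is realized by conjugation by a permutation matrix $P$. A switching isomorphism is the composition of these two operations, so it corresponds to conjugation by a signed permutation matrix $Q = P\, S_U$, which is orthogonal ($Q^{-1} = Q^T$). The hypothesis that $\Gamma$ is switching isomorphic to $-\Gamma$ therefore means that there exists such a $Q$ with
\[
A(-\Gamma) \;=\; Q^T A(\Gamma)\, Q.
\]

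Next, I would observe the immediate identity $A(-\Gamma) = -A(\Gamma)$, which follows directly from the definition of the adjacency matrix: negating every edge sign negates every entry. Substituting into the previous display yields
\[
-A(\Gamma) \;=\; Q^T A(\Gamma)\, Q,
\]
so $A(\Gamma)$ and $-A(\Gamma)$ are similar matrices. Consequently they have the same characteristic polynomial, hence the same multiset of eigenvalues. But the multiset of eigenvalues of $-A(\Gamma)$ is exactly $\{-\lambda_1, \ldots, -\lambda_n\}$, so $\mathrm{spec}(\Gamma)$ coincides with $-\mathrm{spec}(\Gamma)$, i.e., the spectrum is symmetric about the origin.

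There is no genuine obstacle in this argument: all the substantive content has already been packaged in the earlier discussion showing that switching equivalence and vertex permutation act on the adjacency matrix by orthogonal conjugation. The only thing to be careful about is to record that a switching isomorphism (a combination of the two operations) is still realized by an orthogonal conjugation, so that the similarity conclusion — and not merely equality of some weaker invariant — is legitimate.
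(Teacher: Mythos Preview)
Your proposal is correct and follows exactly the paper's approach: the paper simply remarks that if $\Gamma$ is switching isomorphic to $-\Gamma$ then $A$ and $-A$ are similar, from which the theorem is immediate. You have merely spelled out the details of that one-line argument.
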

The converse of the above theorem is not true, and counterexamples arise from the theory of Seidel matrices. The Seidel matrix of a (simple and unsigned) graph $G$ is $S(G)=J-I-2A$, so that adjacent vertices get the value $-1$ and non-adjacent vertices the value $+1$. Hence, the Seidel matrix of an unsigned graph can be interpreted as the adjacency matrix of a signed complete graph. The signature similarity becomes the famous Seidel switching. The graph in Fig. \ref{count} belongs to a triplet of simple graphs on 8 vertices sharing the same symmetric Seidel spectrum but not being pairwise (Seidel-)switching isomorphic. In \cite[p. 253]{ssc}, they are denoted as $A_1$, its complement $\bar{A_1}$ and $A_2$ (note, $A_2$ and its complement $\bar{A_2}$ are Seidel switching isomorphic). In fact, $A_1$ and its complement $\bar{A_1}$ are cospectral but not Seidel switching isomorphic. In terms of signed graphs, the signed graph $A'_1$ whose adjacency matrix is $S(A_1)$ has symmetric spectrum but it is not sign-symmetric.

\unitlength=1mm
\begin{figure}[h]
  \centering
\begin{picture}(40,38)(,-5)
\thicklines
\put(0,0){\circle*{2.0}}\put(10,15){\circle*{2.0}}\put(0,30){\circle*{2.0}}\put(20,0){\circle*{2.0}}\put(20,30){\circle*{2.0}}\put(30,15){\circle*{2.0}}\put(40,0){\circle*{2.0}}\put(40,30){\circle*{2.0}}
\put(0,0){\line(2,3){20}} \put(40,0){\line(-2,3){20}} \put(0,0){\line(1,0){40}} \put(10,15){\line(1,0){20}} \put(10,15){\line(2,-3){10}} \put(30,15){\line(-2,-3){10}}
\end{picture}
\caption{The graph $A_1$.}
\label{count}
\end{figure}
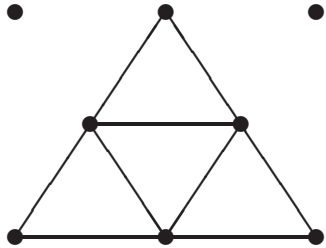
Note that the disjoint union of sign-symmetric graphs is again sign-symmetric. Since the above counterexamples involve Seidel matrices which are the same as signed complete graphs, the following is a natural question.
\begin{prob}
Are there non-complete connected signed graphs whose spectrum is symmetric with respect to the origin but they are not sign-symmetric?
\end{prob}

Observe that signed graphs with symmetric spectrum have odd-indexed coefficients of the characteristic polynomial equal to zero and all spectral moments of odd order are also zero. A simple application of Theorems \ref{Coefficient Theorem} and \ref{Spectral_moments} for $i=3$ or $k=3$, respectively, leads to equal numbers of positive and negative triangles in the graph. When we consider $i=5$ or $k=5$, we cannot say that the numbers of positive and negative pentagons are the same. The following corollary is an obvious consequence of the latter discussion (cf. also \cite[Theorem 1]{ssc}).

\begin{corollary}
A signed graph containing an odd number of triangles cannot be sign-symmetric.
\end{corollary}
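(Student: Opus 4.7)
The plan is to contrapose: assume $\Gamma$ is sign-symmetric and deduce that the number of triangles in $\Gamma$ is even.

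First, I would invoke the previous theorem to note that a sign-symmetric signed graph has spectrum symmetric about the origin. Consequently, for every odd $k$, we have $\sum_i \lambda_i^k = 0$, i.e., all odd spectral moments vanish (equivalently, all odd-indexed coefficients $a_{2j+1}$ of $\phi(\Gamma,x)$ are zero).

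Next I would apply the Spectral Moments theorem (Theorem \ref{Spectral_moments}) with $k=3$. Every closed walk of length $3$ in $\Gamma$ traverses a triangle, and each triangle of $\Gamma$ gives rise to exactly $6$ closed walks of length $3$ (three choices of starting vertex, two choices of orientation), all with the same sign as the triangle itself. Writing $t^+$ and $t^-$ for the numbers of positive and negative triangles, we obtain
\[
0 \;=\; \sum_{i=1}^n \lambda_i^3 \;=\; W^{\pm}_3 \;=\; 6(t^+ - t^-),
\]
so $t^+ = t^-$ and hence the total number of triangles $t^+ + t^-$ is even. (Alternatively, applying the Coefficient Theorem with $i=3$ yields $a_3 = -2(t^+ - t^-) = 0$, giving the same conclusion, since the only basic figures on three vertices are triangles, each with $p=c=1$.)

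This contradicts the hypothesis that $\Gamma$ contains an odd number of triangles, completing the proof. There is no real obstacle here: the only care needed is the bookkeeping of the multiplicity $6$ for closed $3$-walks per triangle (or, in the coefficient-theorem approach, the factor $(-1)^{p(B)}2^{c(B)} = -2$ for each triangle), and the observation that signs of walks coincide with signs of their underlying cycles in this case.
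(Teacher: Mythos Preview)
Your proof is correct and follows exactly the route indicated in the paper: use the previous theorem to get a symmetric spectrum, observe that the third spectral moment (equivalently the coefficient $a_3$) then vanishes, and apply Theorem~\ref{Spectral_moments} (or Theorem~\ref{Coefficient Theorem}) at $k=3$ (resp.\ $i=3$) to conclude $t^+=t^-$, whence the total number of triangles is even. The bookkeeping you give for the factor $6$ (resp.\ $-2$) is accurate.
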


\begin{remark}\label{rem1}
As we mentioned in Section 2, a signed graph with cyclomatic number $\xi$ has exactly $2^\xi$ not equivalent signatures (see also \cite{mal}). On the other hand, the symmetries, if any, in the structure of the underlying graph can make several of those signatures lead to isomorphic signed graphs.
\end{remark}

\subsection{Signed graphs with few eigenvalues}

There is a well-known relation between the diameter and the number of distinct eigenvalues of an unsigned graph (cf. \cite[Theorem 3.3.5]{cve}). In fact, the number of distinct eigenvalues cannot be less than the diameter plus 1. With signed graphs, the usual proof based on the minimal polynomial does not hold anymore. Indeed, the result is not true with signed graphs. As we can see later, it is possible to build signed graphs of any diameter having exactly two distinct eigenvalues.

For unsigned graphs, the identification of graphs with a small number of eigenvalues is a well-known problem. The unique connected graph having just two distinct eigenvalues is the complete graph $K_n$. If a graph is connected and regular, then it has three distinct eigenvalues if and only if it is strongly regular (see \cite[Theorem 3.6.4]{cve}). At  the 1995 British  Combinatorial  Conference, Haemers posed  the  problem of finding connected graphs with three eigenvalues which are neither strongly regular nor complete bipartite. Answering Haemers' question, Van Dam \cite{vanDam3} and Muzychuk and Klin \cite{Muz} described some constructions of such graphs. Other constructions were found by De Caen, van Dam and Spence \cite{CaDaSp} who also noticed that the first infinite family nonregular graphs with three eigenvalues already appeared in the work of Bridges and Mena \cite{BrMe}. The literature on this topic contains many interesting results and open problems. For example, the answer to the following intriguing problem posed by De Caen (see \cite[Problem 9]{vanDamDom}) is still unknown.
\begin{prob}
Does a graph with three distinct eigenvalues have at most three distinct degrees?
\end{prob}
Recent progress was made recently by Van Dam, Koolen and Jia \cite{DamKooJia} who constructed connected graphs with four or five distinct eigenvalues and arbitrarily many distinct degrees. These authors posed the following {\em bipartite} version of De Caen's problem above.
\begin{prob}
Are there connected bipartite graphs with four distinct eigenvalues and more than four distinct valencies ?
\end{prob}

For signed graphs there are also some results. In 2007, McKee and Smyth \cite{smyth} considered symmetric integral matrices whose spectral radius does not exceed 2. In their nice paper, they characterized all such matrices and they further gave a combinatorial interpretation in terms of signed graphs. They defined a signed graph to be {\em cyclotomic} if its spectrum is in the interval $[2,-2]$. The maximal cyclotomic signed graphs have exactly two distinct eigenvalues. The graphs appearing in the following theorem are depicted in Fig. \ref{maximal}.

\begin{theorem}\label{cyclotomic}
Every maximal connected cyclotomic signed graph is switching equivalent to one of the following:
\begin{itemize}
  \item For some $k = 3, 4, \ldots $, the $2k$-vertex toroidal tessellation $T_{2k}$;
  \item The $14$-vertex signed graph $S_{14}$;
  \item The $16$-vertex signed hypercube $S_{16}$.
\end{itemize}
Further, every connected cyclotomic signed graph is contained in a maximal one.
\end{theorem}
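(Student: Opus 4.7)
The plan is to exploit the standard link between symmetric integer matrices with spectrum in $[-2,2]$ and configurations of norm-$\sqrt{2}$ vectors with integer pairwise inner products. Let $\Gamma$ be a cyclotomic signed graph on $n$ vertices with adjacency matrix $A$. Since all eigenvalues of $A$ lie in $[-2,2]$, both $2I+A$ and $2I-A$ are positive semi-definite. Writing $2I+A = V^{\top}V$ yields vectors $v_1,\dots,v_n$ in a Euclidean space with $\langle v_i,v_i\rangle = 2$ and $\langle v_i,v_j\rangle = a_{ij}\in\{-1,0,+1\}$ for $i\ne j$. Such a family is precisely a subset of a simply-laced ($ADE$) root system, finite when $\rho(\Gamma)<2$ and infinite/affine when $\rho(\Gamma)=2$. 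Switching at a subset $U$ corresponds to the sign flips $v_i\mapsto -v_i$ for $i\in U$, so the entire classification can be carried out modulo these flips.

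The first step is to reduce to the affine case. I would argue that any maximal connected cyclotomic signed graph must satisfy $\rho(\Gamma)=2$, i.e., the Gram matrix $2I+A$ is singular. Indeed, if $\rho(\Gamma)<2$ then the vectors $v_i$ generate a positive-definite sublattice of a finite $ADE$ root system, and any finite $ADE$ system admits the standard affine extension by one further norm-$\sqrt{2}$ root with inner product in $\{-1,0,1\}$ against all existing ones; adjoining this root produces a strictly larger cyclotomic signed graph and contradicts maximality.

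The second step is a local-structural classification of maximal connected configurations in the affine simply-laced setting. Around every vertex $v$, the neighbours $v_i$ with $\langle v,v_i\rangle\ne 0$ lie in the hyperplane $v^{\perp}$ and themselves form a cyclotomic configuration of norm-$\sqrt{2}$ vectors, so maximality forces a rigid local pattern at every vertex. In the spirit of the extended Dynkin diagrams $\tilde A$, $\tilde D$, $\tilde E$, I expect three families to survive as globally maximal configurations: cyclic $\tilde A$-type configurations on $2k$ vectors producing the toroidal tessellations $T_{2k}$, an exceptional $14$-vertex configuration $S_{14}$ arising from $\tilde E_7$-type data, and the $16$-vertex signed hypercube $S_{16}$ arising from $\tilde E_8$-type (or $\tilde D_4$-doubling) data. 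The containment clause is then handled by embedding the vector set attached to any connected cyclotomic signed graph into one of these three families (choosing $k$ large enough in the cyclic case) and observing that the containing family is itself maximal.

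The main obstacle will be verifying that these three are the only maximal types. Rather than a brute-force enumeration of subsets of each affine root system, the efficient route is to invoke the extended Dynkin classification directly: any connected integral Gram matrix with diagonal entries $2$ and off-diagonal entries in $\{0,\pm 1\}$ is, after sign flips, a sub-configuration of one of the extended simply-laced diagrams. A subtle point is that several non-switching-isomorphic signed graphs can arise from the same abstract affine root system — distinct spanning sets of roots yield different signed adjacency matrices — so within each affine type one must check that exactly one switching class of maximal signed graphs survives, namely one of $T_{2k}$, $S_{14}$, or $S_{16}$. Confirming that each of the three stated graphs is indeed maximal (no further vertex with labels in $\{0,\pm 1\}$ can be attached without pushing an eigenvalue above $2$) is then a finite case check from the explicit adjacency patterns.
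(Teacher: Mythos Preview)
The paper does not prove this theorem; it is quoted from McKee and Smyth \cite{smyth} and stated without proof. So there is no ``paper's own proof'' to compare against, and the question is simply whether your outline would succeed.

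Your Gram-matrix setup is correct and is indeed the natural entry point: writing $2I+A=V^{\top}V$ realises the vertices as norm-$2$ vectors with pairwise inner products in $\{0,\pm 1\}$, and such families sit inside simply-laced root lattices. Switching is exactly sign-flipping, as you say. However, the classification step is where the plan becomes unreliable. You propose to ``invoke the extended Dynkin classification directly'' and match $T_{2k},S_{14},S_{16}$ to $\tilde A$-, $\tilde E_7$-, $\tilde E_8$-type data. But the extended Dynkin diagrams are themselves cyclotomic signed graphs (trees, or a single cycle in the $\tilde A$ case) with $8$ and $9$ vertices for $\tilde E_7$ and $\tilde E_8$; they are the \emph{minimal} connected configurations with spectral radius equal to $2$, not the maximal ones. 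The objects $T_{2k},S_{14},S_{16}$ are $4$-regular (indeed $A^2=4I$, since the paper notes they have exactly the two eigenvalues $\pm 2$), so they correspond to much larger collections of roots than any set of simple roots. The passage from ``the lattice is of affine type $X$'' to ``the maximal admissible root configuration in that lattice is precisely $T_{2k}$ (resp.\ $S_{14}$, $S_{16}$)'' is the entire content of the theorem, and your outline does not supply it.

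There is also a gap in your first step. Adding the affine (highest) root extends the \emph{simple} system, but your vectors $v_i$ need not be simple roots; they are an arbitrary subset of a root system. You must show that whenever $\rho(\Gamma)<2$ one can adjoin \emph{some} new norm-$2$ vector keeping the enlarged Gram matrix with eigenvalues in $[0,4]$ (equivalently, keeping $2I-A'$ positive semidefinite as well). This is true, but it requires an argument---for instance, first showing that any linearly independent set of roots in a finite $ADE$ system can be enlarged within that system, and checking the upper eigenvalue bound. McKee and Smyth handle both issues by a more hands-on route: they prove that maximal cyclotomic matrices satisfy $A^2=4I$ and then classify symmetric $\{0,\pm 1\}$-matrices with that property by direct combinatorial analysis, rather than by reading the answer off the affine Dynkin list.
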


\begin{figure}[h]
\centering
\includegraphics[width=0.8\textwidth]{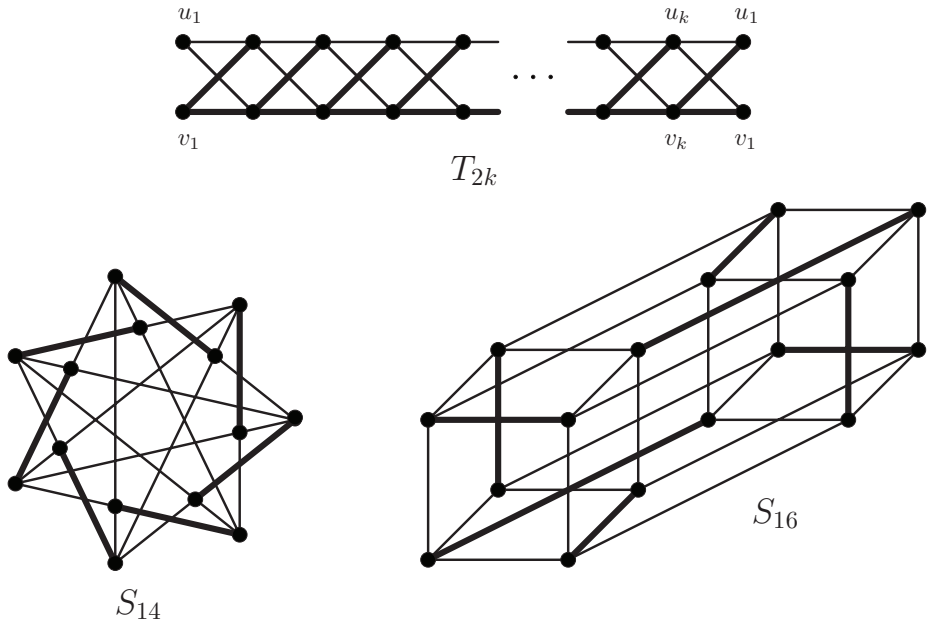}
\caption{Maximal cyclotomic signed graphs.}
\label{maximal}
\end{figure}

It is not difficult to check that all maximal cyclotomic graphs are sign-symmetric. Note that for $k$ even $T_{2k}$ has a bipartite underlying graph, while for $k$ odd $T_{2k}$ has not bipartite underlying graph but it is sign-symmetric, as well. The characteristic polynomial to $T_{2k}$ is $(x-2)^k(x+2)^k$, so $T_{2k}$ is an example of a signed graph with two distinct eigenvalues and diameter $\lfloor\frac{k}{2}\rfloor$.

\begin{prob}[Signed graphs with exactly 2 distinct eigenvalues]
Characterize all connected signed graphs whose spectrum consists of two distinct eigenvalues.
\end{prob}
In the above category we find the complete graphs with homogeneous signatures $(K_n,+)$ and $(K_n,-)$, the maximal cyclotomic signed graphs $T_{2k}$, $S_{14}$ and $S_{16}$, and that list is not complete (for example, the unbalanced $4$-cycle $C_4^-$ and the 3-dimensional cube whose cycles are all negative must be included). There is already some literature on this problem, and we refer the readers to see \cite{gha,Ram}.
All such graphs have in common the property that positive and negative walks of length greater than or equal to 2 between two different and non-adjacent vertices are equal in number. In this way we can consider a {\em signed} variant of the diameter. In a connected signed graph, two vertices are at signed distance $k$ if they are at distance $k$ and the difference between the numbers of positive and negative walks of length $k$ among them is nonzero, otherwise the signed distance is set to $0$. The signed diameter of $\Gamma$, denoted by ${\rm diam}^{\pm}(\Gamma)$, is the largest signed distance in $\Gamma$. Recall that the $(i,j)$-entry of $A^k$ equals the difference between the numbers of positive and negative walks of length $k$ among the vertices indexed by $i$ and $j$. Then we have the following result (cf. \cite[Theorem 3.3.5]{cve}):
\begin{theorem}\label{signdiam}
Let $\Gamma$ be a connected signed graph with $m$ distinct eigenvalues. Then ${\rm diam}^{\pm}(\Gamma)\leq m-1$.
\end{theorem}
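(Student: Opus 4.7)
The plan is to mirror the classical unsigned-graph proof, exploiting the identity recalled just before the theorem: the $(i,j)$-entry of $A^k$ equals the difference between the numbers of positive and negative walks of length $k$ from $i$ to $j$. Set $d := {\rm diam}^{\pm}(\Gamma)$. The case $d = 0$ is trivial since $m \geq 1$, so assume $d \geq 1$ and choose distinct vertices $u, v$ realizing the signed diameter, i.e., vertices whose graph distance equals $d$ and for which $(A^d)_{uv} \neq 0$.

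The first key step is to observe that $(A^k)_{uv} = 0$ for all $0 \leq k < d$; this uses only the graph distance (no walk of length less than the graph distance between $u$ and $v$ exists) and is insensitive to the signature, since the signed walk count is bounded in absolute value by the ordinary walk count. Combined with $(A^d)_{uv} \neq 0$, this shows that $A^d$ cannot lie in the linear span of $I, A, \ldots, A^{d-1}$: any such linear relation, evaluated at entry $(u,v)$, would force $(A^d)_{uv} = 0$. Hence the matrices $I, A, A^2, \ldots, A^d$ are linearly independent, and the minimal polynomial of $A(\Gamma)$ has degree at least $d + 1$.

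The proof concludes by noting that $A(\Gamma)$ is a real symmetric matrix, so it is diagonalizable and its minimal polynomial has degree exactly equal to the number $m$ of distinct eigenvalues. Therefore $d + 1 \leq m$, which rearranges to ${\rm diam}^{\pm}(\Gamma) \leq m - 1$. I do not anticipate any genuine obstacle: the one place where the unsigned argument could conceivably fail is the vanishing of low powers of $A$ at the entry $(u,v)$, but that is a statement about the combinatorial nonexistence of short walks, which is unaffected by assigning signs to the edges.
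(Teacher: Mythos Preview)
Your proof is correct and follows essentially the same route as the paper's own argument. The paper proceeds by contradiction (assuming a pair $s,t$ at signed distance $p\ge m$ and then expanding $A^p$ via the minimal polynomial), while you phrase it directly by bounding the degree of the minimal polynomial from below; in both cases the crux is exactly the observation you isolate, namely that the signed distance coincides with the ordinary graph distance, so $(A^k)_{uv}=0$ for $k<d$ while $(A^d)_{uv}\ne0$.
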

\begin{proof}
Assume the contrary, so that $\Gamma$ has vertices, say $s$ and $t$, at signed distance $p\geq m$. The adjacency matrix $A$ of $\Gamma$ has minimal polynomial of degree $m$, and so we may write $A^p = \sum_{k=0}^{m-1} a_k A^k$. This yields the required contradiction because the $(s, t)$-entry on the right is zero, while the $(s, t)$-entry on the left is non-zero.
\end{proof}

Recently, Huang \cite{Huang} constructed a signed adjacency matrix of the $n$-dimensional hypercube whose eigenvalues are $\pm \sqrt{n}$, each with multiplicity $2^{n-1}$. Using eigenvalue interlacing, Huang proceeds to show that the spectral radius (and therefore, the maximum degree) of any induced subgraph on $2^{n-1}+1$ vertices of the $n$-dimensional hypercube, is at least $\sqrt{n}$. This led Huang to a breakthrough proof of the Sensitivity Conjecture from theoretical computer science. We will return to Huang's construction after Theorem \ref{dag_thm}.

\subsection{The largest eigenvalue of signed graphs}

In the adjacency spectral theory of unsigned graphs the spectral radius is the largest eigenvalue and it has a prominent role because of its algebraic features, its connections to combinatorial parameters such as the chromatic number, the independence number or the clique number and for its relevance in applications. There is a large literature on this subject, see \cite{BrNe,CveRo,Hoff1,Hoff2,Nik1,Ste,WCWF} for example.

As already observed, the presence of negative edges leads invalidates of the Perron--Frobenius theorem, and we lose some nice features of the largest eigenvalue:
\begin{itemize}
  \item The largest eigenvalue may not be the spectral radius although by possibly changing the signature to its negative, this can be achieved.
  \item The largest eigenvalue may not be a simple eigenvalue.
  \item Adding edges might reduce the largest eigenvalue.
\end{itemize}
Therefore one might say that it not relevant to study signed graphs in terms of the magnitude of the spectral radius. In this respect, Theorem \ref{interlacing vertex} and Theorem \ref{spread} are helpful because the spectral radius does not decrease under the addition of vertices (together with some incident edges), and the spectral radius of the underlying graph naturally limits the magnitude of the eigenvalues of the corresponding signed graph. For the same reason, the theory of limit points for the spectral radii of graph sequences studied by Hoffman in \cite{Hoff1,Hoff2} is still valid in the context of signed graphs.

The {\em Hoffman program} is the identification of connected graphs whose spectral radii do not exceed some special limit points established by A.J. Hoffman \cite{Hoff2}. The smallest limit point for the spectral radius is $2$ (the limit point of the paths of increasing order), so the first step would be to identify all connected signed graphs whose spectral radius does not exceed $2$. The careful reader notices that the latter question has already been completely solved by Theorem \ref{cyclotomic}. Therefore, the problem jumps to the next significant limit point, which is $\sqrt{2+\sqrt{5}}={\tau}^{\frac{1}{2}}+{\tau}^{-\frac{1}{2}}$, where $\tau$ is the golden mean. This limit point is approached from above (resp., below) by the sequence of positive (resp., negative) cycles with exactly one pendant vertex and increasing girth.

In \cite{BrNe,CvDoGu}, the authors identified all connected unsigned graphs whose spectral radius does not exceed $\sqrt{2+\sqrt{5}}$. Their structure is fairly simple: they mostly consist of paths with one or two additional pendant vertices. Regarding signed graphs, we expect that the family is quite a bit larger than that of unsigned graphs. A taste of this prediction can be seen by comparing the family of Smith Graphs (the unsigned graphs whose spectral radius is $2$, cf. Fig. 2.4 in \cite{CvDS}) with the graphs depicted in Fig. \ref{maximal}. On the other hand, the graphs identified by Cvetkovi\'c et al. acts as a ``skeleton'' (that is, appear as subgraphs) of the signed graphs with the same bound on the spectral radius.

\begin{prob}[Hoffman Program for Signed Graphs]
Characterize all connected signed graphs whose spectral radius does not exceed $\sqrt{2+\sqrt{5}}$.
\end{prob}

\subsection{The smallest eigenvalue of signed graphs}

Unsigned graphs with smallest eigenvalue at least $-2$ have been characterized in a veritable tour de force by several researchers. We mention here Cameron, Goethals, Seidel and Shult \cite{CGSS}, Bussemaker and Neumaier \cite{BN} who among other things, determined a complete list of minimal forbidden subgraphs for the class of graphs with smallest eigenvalue at least $-2$. A monograph devoted to this topic is \cite{cve-2} whose Chapter 1.4 tells the history about the characterization of graphs with smallest eigenvalue at least $-2$.
\begin{theorem}\label{cgss_thm}
If $G$ is a connected graph with smallest eigenvalue at least $-2$, then $G$ is a generalized line graph or has at most $36$ vertices.
\end{theorem}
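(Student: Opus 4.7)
The plan is to reduce the problem to the classification of (simply-laced) root systems, following the Cameron--Goethals--Seidel--Shult approach. The starting point is the observation that $\lambda_n(G) \ge -2$ is equivalent to $A(G) + 2I$ being positive semi-definite. So I can write $A(G) + 2I = V^{T} V$ for some real matrix $V$; letting $v_1, \ldots, v_n$ be its columns, one has $\langle v_i, v_i\rangle = 2$ and $\langle v_i, v_j\rangle \in \{0,1\}$ for $i \ne j$, with $\langle v_i, v_j\rangle = 1$ precisely when $ij \in E(G)$. Thus each $v_i$ is a vector of squared norm $2$, and distinct ones are either orthogonal or meet at angle $\pi/3$.

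Next I would close $\{v_1, \ldots, v_n\}$ under reflections and sign changes to obtain the smallest set $R \supseteq \{\pm v_i\}$ of squared-norm-$2$ vectors that is invariant under reflection in each of its members. One then verifies that $R$ is a (reduced, crystallographic) root system in the linear span of the $v_i$, with all roots of the same length. Since $G$ is connected, the corresponding subgraph on the $v_i$'s in the Dynkin-like incidence forces $R$ to be indecomposable (if two $v_i$'s are orthogonal, connectivity still links them through common neighbours, so the induced root subsystem cannot split as an orthogonal sum). Invoking the classical ADE-classification of indecomposable simply-laced root systems, $R$ is isomorphic to one of $A_m$, $D_m$, $E_6$, $E_7$, $E_8$.

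Now I would split the argument into two cases. If $R \cong A_m$ or $R \cong D_m$, then I would exhibit the standard models: vectors in $A_m$ are of the form $e_i - e_j$, and vectors in $D_m$ are $\pm e_i \pm e_j$. In either case a standard computation shows that the multiset $\{v_i\}$ can be read as the edge set of a (possibly non-simple) host multigraph $H$ whose line graph, respectively generalized line graph (to account for the $\pm e_i + e_j$ type edges arising in $D_m$), equals $G$. This yields the first alternative of the statement. If instead $R$ is of exceptional type $E_6$, $E_7$, or $E_8$, then since $E_6 \subset E_7 \subset E_8$ the vectors $v_i$ all lie inside the $8$-dimensional root system $E_8$, which has only $240$ roots in total. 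A finite (but delicate) combinatorial analysis of how many vectors with pairwise inner products in $\{0,1\}$ one can choose inside $E_8$ yields the explicit upper bound $n \le 36$.

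The main obstacle is this last combinatorial bound: reducing the naive bound (at most $120$ from taking one representative of each $\pm$-pair in $E_8$) down to the sharp value $36$ requires a careful case analysis over the $E_8$-geometry, typically using the action of the Weyl group $W(E_8)$ to normalise a configuration and then bounding the size of a maximal $\{0,1\}$-inner-product set. The cleanest route is to rely on the classification of \emph{exceptional} graphs with least eigenvalue $\ge -2$ carried out by Bussemaker--Neumaier (and refined via the work on generalized line graphs in \cite{cve-2}), which enumerates those that do not arise from $A_m$ or $D_m$ and verifies that each has at most $36$ vertices; alternatively, the bound can be extracted from the fact that the maximum independent set of a suitable Cayley graph on $E_8$ has size $36$.
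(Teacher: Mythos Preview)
The paper does not give its own proof of this theorem: it is stated as a classical result and attributed to Cameron, Goethals, Seidel and Shult \cite{CGSS} (with the minimal-forbidden-subgraph refinement credited to Bussemaker and Neumaier \cite{BN}), so there is nothing in the paper to compare your argument against beyond the citation. Your sketch is precisely the standard CGSS root-system approach: pass from $A(G)+2I\succeq 0$ to a set of norm-$\sqrt{2}$ vectors with inner products in $\{0,1\}$, close under reflections to obtain an irreducible simply-laced root system, and invoke the ADE classification. The identification of the $A_m$/$D_m$ cases with (generalized) line graphs is the correct dichotomy, and your acknowledgement that the sharp bound $36$ in the $E_8$ case is the delicate part is accurate; in the literature this is obtained either by the explicit enumeration of exceptional graphs (as in \cite{BN,cve-2}) or by bounding the size of a set of $E_8$-roots with pairwise inner products in $\{0,1\}$, with the extremal example being the $36$-vertex exceptional graph coming from $E_8$. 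So your proposal is a faithful outline of the proof the paper is citing, not a new argument.
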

In the case of unsigned graphs, their work was extended, under some minimum degree condition, from $-2$ to $-1-\sqrt{2}$ by Hoffman \cite{Hoff77} and Woo and Neumaier \cite{WN} and more recently, to $-3$ by Koolen, Yang and Yang \cite{KYY}.

For signed graphs, some of the above results were extended by Vijayakumar \cite{V} who showed that any connected signed graph with smallest eigenvalue less than $-2$ has an induced signed subgraph with at most $10$ vertices and smallest eigenvalue less than $-2$. Chawathe and Vijayakumar \cite{ChVi} determined all minimal forbidden signed graphs for the class of signed graphs whose smallest eigenvalue is at least $-2$. Vijayakumar's result \cite[Theorem 4.2]{V} was further extended by Koolen, Yang and Yang \cite[Theorem 4.2]{KYY} to signed matrices whose diagonal entries can be $0$ or $-1$. These authors introduced the notion of $s$-integrable graphs. For an unsigned graph $G$ with smallest eigenvalue $\lambda_{min}$ and adjacency matrix $A$, the matrix $A-\lfloor \lambda_{min}\rfloor I$ is positive semidefinite. For a natural number $s$, $G$ is called $s$-integrable if there exists an integer matrix $N$ such that $s(A-\lfloor \lambda_{min}\rfloor I)=NN^{T}$. Note that generalized line graphs are exactly the $1$-integrable graphs with smallest eigenvalue at least $-2$.
In a straightforward way, the notion of s-integrabilty can be extended to signed graphs. Now we can extend Theorem \ref{cgss_thm} to the class of signed graphs with essentially the same proof.
\begin{theorem}
Let $\Gamma$ be a connected signed graph with smallest eigenvalue at least $-2$. Then $\Gamma$ is $2$-integrable.
Moreover, if $\Gamma$ has at least $121$ vertices, then $\Gamma$ is $1$-integrable.
\end{theorem}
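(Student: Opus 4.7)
The plan is to follow the script of the classical Cameron--Goethals--Seidel--Shult proof of Theorem \ref{cgss_thm}, replacing the integer Gram representation by its natural signed-graph analogue and replacing the classification of unsigned line-graph exceptions by the Vijayakumar/Koolen--Yang--Yang classification for signed graphs \cite{V,KYY}.

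First, because $\lambda_{\min}(\Gamma)\ge -2$, the integer symmetric matrix $A(\Gamma)+2I$ is positive semidefinite; writing it as a Gram matrix $V^{\top}V$ yields vectors $v_1,\dots,v_n$ of squared length $2$ with integer pairwise inner products in $\{-1,0,+1\}$. These are exactly the defining conditions of a root-like configuration, so the lattice $L=\mathbb{Z}\langle v_1,\dots,v_n\rangle$ is a root lattice. Invoking the signed-graph extension of the root-system classification \cite{V,KYY}, I would decompose $L$ as an orthogonal direct sum of irreducible ADE root lattices.

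For the 2-integrability conclusion, the task reduces to exhibiting, for every irreducible ADE root lattice $L$, an isometric embedding of $\sqrt{2}\,L$ into some $\mathbb{Z}^{m}$. For types $A_k$ and $D_k$ this is standard via the doubling map $e_i\mapsto e_{2i-1}+e_{2i}$, which scales the standard inner product by $2$; for the exceptional types one uses Construction~A applied to a suitable binary code (e.g.\ the extended Hamming $[8,4,4]$ code realises $\sqrt{2}\,E_8$ as a sublattice of $\mathbb{Z}^{8}$, with $\sqrt{2}\,E_6$ and $\sqrt{2}\,E_7$ inherited as sublattices). Assembling the integer representatives of $\sqrt{2}\,v_1,\dots,\sqrt{2}\,v_n$ as the rows of a matrix $N$ then yields $2(A(\Gamma)+2I)=NN^{\top}$, which is by definition 2-integrability. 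For the strengthening when $n\ge 121$, one uses the fact that each $E_k$ lattice contains only finitely many vectors of squared length $2$ (namely $72$, $126$, $240$ for $k=6,7,8$), so exceptional components of $L$ can absorb only a bounded number of the $v_i$'s. A careful signed-graph analogue of the Bussemaker--Neumaier inventory should show that at most $120$ vertices of $\Gamma$ can be represented in an exceptional component; thus, once $n\ge 121$, connectedness of $\Gamma$ forces $L$ to be of type $A$ or $D$ only. Since $A_k,D_k$ embed isometrically into $\mathbb{Z}^{k+1}$, this produces integer vectors $u_i$ with $(A(\Gamma)+2I)_{ij}=\langle u_i,u_j\rangle$, i.e.\ 1-integrability.

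The main obstacle will be pinning down the sharp numerical threshold $121$. The unsigned analogue ($36$) is already famously delicate, relying on a detailed enumeration of exceptional subsystems; in the signed setting one has the extra complication that switching-equivalence identifies many a priori distinct configurations, and bookkeeping these equivalence classes correctly is where the bulk of the technical work should lie. Everything else---the Gram representation, the ADE classification, and the explicit lattice embeddings---is essentially routine once the signed classification is in hand, which is exactly why the authors remark that the proof is ``essentially the same'' as the unsigned one.
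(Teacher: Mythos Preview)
Your approach is correct and is exactly what the paper intends—the paper gives no proof beyond the remark that it is ``essentially the same'' as for Theorem~\ref{cgss_thm}. One point deserves correction, however: you substantially overestimate the work needed for the threshold $121$. The unsigned bound $36$ is delicate because one must find maximal subsets of $E_8$-roots with all pairwise inner products in $\{0,1\}$. In the signed setting the off-diagonal Gram constraint is only $\langle v_i,v_j\rangle\in\{-1,0,1\}$, together with $v_i\ne\pm v_j$ (forced by simplicity of the underlying graph). This is satisfied by \emph{any} choice of one root from each antipodal pair $\{r,-r\}$, so the maximum number of vertices representable in $E_8$ is simply $240/2=120$—no Bussemaker--Neumaier-style enumeration is required. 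Switching equivalence, far from being an ``extra complication'', is precisely what makes the signed count trivial: replacing $v_i$ by $-v_i$ is a switching at vertex $i$. The paper makes exactly this observation immediately after the theorem, exhibiting the extremal $120$-vertex example. Once $n\ge121$, connectedness forces the irreducible root lattice to be of type $A$ or $D$, and $1$-integrability is immediate from their standard realizations inside $\mathbb{Z}^m$.
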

As $E_8$ has $240$ vectors of (squared) norm $2$, one can take from each pair of such a vector and its negative exactly one to obtain a signed graph on $120$ vertices with smallest eigenvalue $-2$ that is not 1-integrable. Many of these signed graphs are connected.

Koolen, Yang and Yang \cite{KYY}  proved that if a connected unsigned graph has smallest eigenvalue at least $-3$ and valency large enough, then $G$ is $2$-integrable. An interesting direction would be to prove a similar result for signed graphs.
\begin{prob}
Extend \cite[Theorem 1.3]{KYY} to signed graphs.
\end{prob}
An interesting related conjecture was posed by Koolen and Yang \cite{KY}.
\begin{conj}
There exists a constant $c$ such that if $G$ is an unsigned graph with smallest eigenvalue at least $-3$, then $G$ is $c$-integrable.
\end{conj}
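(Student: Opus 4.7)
The plan is to reduce the conjecture to a structural dichotomy and argue integer representability uniformly on each side. Write $M := A(G) + 3I$, which is positive semidefinite by hypothesis, with diagonal $3$ and off-diagonal entries in $\{0,1\}$. The goal is to produce an absolute constant $c$ and, for every such $G$, an integer matrix $N$ with $cM = N^T N$; equivalently, to realize $M$ as the Gram matrix of rational vectors with common denominator at most $\sqrt{c}$, uniformly in $G$.

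The first step is a dichotomy on the minimum valency $\delta(G)$. If $\delta(G)$ exceeds a threshold $K_{0}$ depending only on the eigenvalue bound $-3$, then \cite[Theorem~1.3]{KYY} already yields $2$-integrability, so nothing further is needed in this regime. The substantive case is $\delta(G) \le K_{0}$, which I would attack by a local lattice argument combined with forbidden-subgraph analysis in the spirit of \cite{CGSS,BN,ChVi}. For each vertex $v$, the lattice $L_{v}$ spanned by the representing vectors of $v$ and its neighbours has rank at most $K_{0}+1$ and Gram matrix with entries in $\{0,1,3\}$, so only finitely many isometry classes of $L_{v}$ occur. Over this finite list the denominators needed to realize each $L_{v}$ inside a standard integer lattice are bounded, producing a local constant $c_{\mathrm{loc}}$.

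The delicate remaining step is globalization: assembling the locally integer representations into a single integer representation of $M$ with a controlled blow-up of the denominator. I would try to adapt the inductive construction of \cite[Theorem~1.3]{KYY}, which builds an integer representation by peeling off vertices and extending the representation, and graft it onto the finite list from the low-valency case via a compatibility lemma: on any overlap between two local charts, the transition map must be a lattice isomorphism with uniformly bounded denominator. If this can be made to work, the final $c$ is essentially the product of $c_{\mathrm{loc}}$, the Koolen--Yang--Yang constant, and a combinatorial factor bounded in terms of how many local charts one glues.

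The main obstacle is precisely this globalization. A priori the lattice generated by all the representing vectors could have discriminant growing with $|V(G)|$, even though each $L_{v}$ is uniformly nice, and ruling this out requires either a classification of the possible global ambient lattices (analogous to the role played by $D_n$ and $E_6, E_7, E_8$ for $\lambda_{\min} \ge -2$) or an inductive argument that bypasses such a classification. A natural intermediate target, of independent interest, would be to prove a bound $c = c(\delta)$ depending only on the minimum valency, since the high-valency regime is already settled; any such bound that is uniform in $\delta$ would establish the conjecture.
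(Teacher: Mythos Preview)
The statement you are attempting to prove is presented in the paper as an \emph{open conjecture} due to Koolen and Yang; the paper offers no proof, nor does it sketch one. So there is no ``paper's own proof'' to compare against. Your write-up is accordingly not a proof but a research strategy, and you are candid about this: you explicitly flag the globalization step as ``the main obstacle'' and phrase the key moves conditionally (``if this can be made to work''). That honesty is appropriate, but it means the proposal does not establish the conjecture.

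On the substance of the strategy: the dichotomy on minimum valency is the natural starting point, and invoking \cite[Theorem~1.3]{KYY} to dispose of the high-valency regime is correct. The genuine gap is exactly where you locate it. Bounding the local lattices $L_v$ up to finitely many isometry types when $\delta(G)\le K_0$ does \emph{not} obviously control the global lattice, because a low-minimum-valency graph can still have vertices of arbitrarily large degree, so most local pieces $L_v$ are not covered by your finiteness argument at all; and even where they are, the ``compatibility lemma'' you propose---that overlaps are lattice isomorphisms with uniformly bounded denominator---is precisely the unproven heart of the matter. For $\lambda_{\min}\ge -2$ the analogous globalization succeeds only because one has the classification into $D_n$ and $E_6,E_7,E_8$; no such classification is known at $-3$, and your proposal does not supply a substitute. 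In short, the outline identifies the right decomposition and the right difficulty, but the difficulty remains untouched, which is consistent with the conjecture's open status.
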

Koolen, Yang and Yang \cite{KYY} also introduced $(-3)$-maximal graphs or maximal graphs with smallest eigenvalue $-3$. These are connected graphs with smallest eigenvalue at least $-3$ such any proper connected supergraph has smallest eigenvalue less than $-3$. Koolen and Munemasa \cite{KM} proved that the join between a clique on three vertices and the complement of the McLaughlin graph (see Goethals and Seidel \cite{GoeSei} or Inoue \cite{Inoue} for a description) is $(-3)$-maximal.
\begin{prob}
Construct maximal signed graphs with smallest eigenvalue at least $-3$.
\end{prob}
Woo and Neumaier \cite{WN} introduced the notion of Hoffman graphs, which has proved an essential tool in many results involving the smallest eigenvalue of unsigned graphs (see \cite{KYY}). Perhaps a theory of signed Hoffman graphs is possible as well.
\begin{prob}
Extend the theory of Hoffman graphs to signed graphs.
\end{prob}

\subsection{Signatures minimizing the spectral radius}

As observed in Section 2, an unsigned graph with cyclomatic number $\xi$ gives rise to at most $2^{\xi}$ switching non-isomorphic signed graphs. In view of Theorem \ref{spread}, we know that, up to switching equivalency, the signature leading to the maximal spectral radius is the all-positive one. A natural question is to identify which signature leads to the minimum spectral radius.

\begin{prob}[Signature minimizing the spectral radius]\label{minsr}
Let $\Gamma$ be a simple and connected unsigned graph. Determine the signature(s) $\bar{\sigma}$ such that for any signature $\sigma$ of $\Gamma$, we have $\rho(\Gamma,\bar{\sigma})\leq \rho(\Gamma,\sigma)$.
\end{prob}
This problem has important connections and consequences in the theory of expander graphs. Informally, an expander is a sparse and highly connected graph. Given an integer $d\geq 3$ and $\lambda$ a real number, a $\lambda$-expander is a connected $d$-regular graph whose (unsigned) eigenvalues (except $d$ and possibly $-d$ if the graph is bipartite) have absolute value at most $\lambda$. It is an important problem in mathematics and computer science to construct, for fixed $d\geq 3$, infinite families of $\lambda$-expanders for $\lambda$ {\em small} (see \cite{BL,HLW,MSS15} for example). From work of Alon-Boppana (see \cite{CKNV16,HLW,N}), we know that $\lambda=2\sqrt{d-1}$ is the best bound we can hope for and graphs attaining this bound are called Ramanujan graphs.

Bilu and Linial \cite{BL} proposed the following combinatorial way of constructing infinite families of $d$-regular Ramanujan graphs. A double cover (sometimes called $2$-lift or $2$-cover) of a graph $\Gamma=(G=(V,E),\sigma)$ is the (unsigned) graph $\Gamma'$ with vertex set $V \times \{+1,-1\}$ such that $(x,s)$ is adjacent to $(y,s\sigma(xy))$ for $s = \pm1$.  It is easy to see that if $\Gamma$ is $d$-regular, then $\Gamma'$ is $d$-regular. A crucial fact is that the spectrum of the unsigned adjacency matrix of $\Gamma'$ is the union of the spectrum of the unsigned adjacency matrix $A(G)$ and the spectrum of signed adjacency matrix $A_\sigma=A(\Gamma)$, where $A_\sigma(x,y)=\sigma(x,y)$ for any edge $xy$ of $\Gamma$ and $0$ otherwise (see \cite{BL} for a short proof). Note that this result can be deduced using the method of equitable partitions (see \cite[Section 2.3]{BH}), appears in the mathematical chemistry literature in the work of Fowler \cite{Fowler} and was extended to other matrices and directed graphs by Butler \cite{Butler}.

The spectral radius of a signing $\sigma$ is the spectral radius $\rho(A_\sigma)$ of the signed adjacency matrix $A_\sigma$. Bilu and Linial \cite{BL} proved the important result
\begin{theorem}[Bilu-Linial \cite{BL}]
Every connected $d$-regular graph has a signing with spectral radius at most $c\cdot \sqrt{d\log^3 d}$, where $c>0$ is some absolute constant.
\end{theorem}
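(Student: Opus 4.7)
The plan is to use the probabilistic method combined with a converse to the expander mixing lemma, which is the approach of Bilu and Linial. The proof breaks naturally into two parts.

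\textbf{Step 1 (Discrepancy implies small spectral radius).} I would first prove a \emph{converse expander mixing lemma} in the following form: if $A_\sigma$ is a signed adjacency matrix of a $d$-regular graph on $n$ vertices, and
\[
\bigl|\mathbf{1}_S^{\top} A_\sigma \mathbf{1}_T\bigr| \leq \alpha \sqrt{|S|\,|T|} \qquad \text{for all } S, T \subseteq V(G),
\]
then $\rho(A_\sigma) = O\!\left(\alpha\bigl(1 + \log(d/\alpha)\bigr)\right)$. The argument bounds $|x^{\top} A_\sigma y|$ for arbitrary unit vectors $x, y$ via a dyadic decomposition: write $x = \sum_i x^{(i)}$ where $x^{(i)}$ is supported on $S_i = \{v : |x_v| \in [2^{-i-1}, 2^{-i}]\}$, and analogously $y = \sum_j y^{(j)}$. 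Up to a factor of $2$ in norm, each $x^{(i)}$ may be replaced by $2^{-i} \mathbf{1}_{S_i}$, so the hypothesis gives $|(x^{(i)})^{\top} A_\sigma y^{(j)}| = O(\alpha \|x^{(i)}\|\,\|y^{(j)}\|)$. Summing over $i, j$ and applying Cauchy--Schwarz on the number of contributing scales completes the bound. The key subtlety is reducing the dyadic range from $O(\log n)$ down to $O(\log(d/\alpha))$; this uses the trivial estimate $e(S,T) \leq d \sqrt{|S||T|}$, valid because $G$ is $d$-regular, to truncate coordinates smaller than roughly $\alpha/(d\sqrt{n})$ without incurring more than an additive constant error.

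\textbf{Step 2 (Random signings have small discrepancy).} I would show by the probabilistic method that a uniformly random signing $\sigma$, assigning an independent Rademacher sign to each edge, satisfies the hypothesis of Step 1 with $\alpha = O(\sqrt{d \log d})$ with positive probability. For any fixed $S, T$, the scalar $\mathbf{1}_S^{\top} A_\sigma \mathbf{1}_T$ is (up to a factor of $2$ accounting for edges inside $S \cap T$) a sum of $e(S,T) \leq d\min(|S|,|T|)$ independent Rademacher variables, and Hoeffding's inequality yields
\[
\Pr\!\left[\bigl|\mathbf{1}_S^{\top} A_\sigma \mathbf{1}_T\bigr| \geq \alpha \sqrt{|S|\,|T|}\right] \leq 2 \exp\!\left(-\frac{c\,\alpha^2 \max(|S|,|T|)}{d}\right)
\]
for an absolute constant $c > 0$. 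It is crucial that the variance proxy is $e(S,T)$ rather than $|S|\,|T|$, so the sparsity enforced by $d$-regularity genuinely enters. Dyadic bucketing of the sizes $(|S|,|T|)$ together with a union bound over the $\binom{n}{s}\binom{n}{t}$ pairs of each size then succeeds with $\alpha = O(\sqrt{d\log d})$, provided attention is restricted to the size range that actually matters in the converse mixing lemma of Step 1.

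Combining the two steps, there exists a signing $\sigma$ with
\[
\rho(A_\sigma) = O\!\left(\alpha \log d\right) = O\!\left(\sqrt{d \log d}\cdot \log d\right) = O\!\left(\sqrt{d \log^3 d}\right),
\]
as required.

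\textbf{Main obstacle.} The hardest part is the converse mixing lemma of Step 1, and specifically the replacement of the naive factor $\log n$ by $\log d$. This is precisely where the $d$-regularity is genuinely exploited: without a bounded row norm, the dyadic decomposition must span all $O(\log n)$ magnitude scales and would produce an $n$-dependent bound. A second, related technical point is matching the dyadic bucketing in Step 2 to the truncation of Step 1, so that the discrepancy hypothesis need only be verified for subsets whose sizes lie in the restricted range where the Chernoff-plus-union-bound argument delivers $\alpha = O(\sqrt{d\log d})$ rather than the weaker $O(\sqrt{d\log n})$ that one obtains from a naive union bound over all pairs of subsets.
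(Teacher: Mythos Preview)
The paper is a survey and does not supply its own proof of this theorem; it simply quotes the result and cites Bilu and Linial \cite{BL}. Your sketch is precisely the Bilu--Linial argument from that paper: a converse to the expander mixing lemma (discrepancy of all bilinear cuts bounds the spectral radius, with the crucial $\log(d/\alpha)$ rather than $\log n$ dependence coming from truncating small coordinates via the trivial $d$-regular edge bound), followed by a Chernoff-plus-union-bound analysis of a uniformly random signing to obtain discrepancy $\alpha = O(\sqrt{d\log d})$. Both the overall structure and your identification of the main obstacle---getting the factor down from $\log n$ to $\log d$ in Step~1 and matching the size restrictions between the two steps---are accurate, so there is nothing to correct.
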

\noindent and made the following conjecture.
\begin{conj}[Bilu-Linial \cite{BL}]\label{conj-reg}
Every connected $d$-regular graph $G$ has a signature $\sigma$ with spectral radius at most $2\sqrt{d-1}$.
\end{conj}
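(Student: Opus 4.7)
The proposed approach is the method of interlacing families of Marcus, Spielman, and Srivastava. Label the edges $e_1,\dots,e_m$ of $G$, so that a signing corresponds to a vector $(\sigma_1,\dots,\sigma_m)\in\{\pm 1\}^m$, and consider the $2^m$ characteristic polynomials $p_\sigma(x)=\det(xI-A_\sigma)$ organised at the leaves of a binary decision tree whose internal node at depth $k$ carries the conditional expected polynomial $\mathbb{E}[p_\sigma\mid \sigma_1,\dots,\sigma_k]$ under the uniform measure on signings. The first goal is to prove that this tree is an \emph{interlacing family}: for every partial signing, the two children polynomials obtained by splitting on the next edge share a common real-rooted interlacer. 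The standard route, which I would follow, is to combine a rank-one update formula for $\det(xI-A_\sigma)$ across a single edge flip with the real-stability preservers of Borcea--Br\"and\'en; this is the technical heart of the argument.

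Once interlacing is established, the usual averaging lemma produces a signing $\sigma^\ast$ with $\lambda_{\max}(p_{\sigma^\ast}) \le \lambda_{\max}\!\left(\mathbb{E}_\sigma p_\sigma\right)$. The second goal is then to bound the largest root of $\mathbb{E}_\sigma p_\sigma$ by $2\sqrt{d-1}$. When $G$ is bipartite, a Godsil--Gutman type identity identifies $\mathbb{E}_\sigma p_\sigma$ with the matching polynomial $\mu(G,x)$, whose roots lie in $[-2\sqrt{d-1},\,2\sqrt{d-1}]$ by the Heilmann--Lieb theorem; moreover, because $G$ is bipartite, $A_{\sigma^\ast}$ has the block form $\bigl(\begin{smallmatrix} 0 & B\\ B^{\mathsf T} & 0\end{smallmatrix}\bigr)$ and hence its spectrum is symmetric about $0$, so the one-sided bound $\lambda_{\max}(A_{\sigma^\ast})\le 2\sqrt{d-1}$ already yields $\rho(A_{\sigma^\ast})\le 2\sqrt{d-1}$. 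This settles the bipartite case and recovers the Marcus--Spielman--Srivastava construction of bipartite Ramanujan graphs via the lifting decomposition recorded just before the conjecture.

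The decisive obstacle is the non-bipartite case, and it is twofold. First, the Godsil--Gutman identity breaks: $\mathbb{E}_\sigma p_\sigma$ acquires extra contributions from closed walks that traverse an odd number of edges (equivalently, by Theorem~\ref{Spectral_moments}, the odd spectral moments no longer vanish), and no clean combinatorial model whose real roots are controllable by $2\sqrt{d-1}$ is currently known. Second, $A_\sigma$ no longer has a spectrum symmetric about the origin, so a bound on $\lambda_{\max}(A_{\sigma^\ast})$ does not bound $-\lambda_{\min}(A_{\sigma^\ast})$, yet the spectral radius demands both for the \emph{same} $\sigma^\ast$. A natural workaround is to run the interlacing argument on the bipartite double cover $\widetilde G$ and transfer back a signing of $G$ using $\operatorname{Spec}(\widetilde G)=\operatorname{Spec}(A(G))\cup\operatorname{Spec}(A_\sigma)$; but the transfer only controls one side of the spectrum of $A_\sigma$, so either an additional symmetrisation or a genuinely two-sided interlacing family (one that simultaneously tracks $\lambda_{\max}$ and $-\lambda_{\min}$, perhaps via the characteristic polynomial of $A_\sigma^2$ or of a $2\times 2$ block dilation) is required. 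Designing such a two-sided argument is where I would expect the bulk of the effort to lie, and it is precisely this difficulty that has kept the conjecture open for non-bipartite $d$-regular graphs despite the Marcus--Spielman--Srivastava breakthrough.
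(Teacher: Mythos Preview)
The statement is a \emph{conjecture}, and the paper does not prove it; it is recorded there as an open problem, with the Marcus--Spielman--Srivastava theorem quoted as the best known partial result. Your write-up correctly recognises this and is not really a proof attempt but a survey of the MSS approach together with an analysis of the remaining obstacles, which is broadly in line with the paper's own discussion.

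That said, there is a factual error in your diagnosis of the non-bipartite obstruction. The Godsil--Gutman identity does \emph{not} break when $G$ is non-bipartite: for every graph $G$, the uniform average $\mathbb{E}_\sigma\det(xI-A_\sigma)$ equals the matching polynomial $\mu_G(x)$. The paper states this with no bipartiteness hypothesis, and the proof is the standard permutation expansion in which any cycle of length $\ge 3$ picks up an independent $\pm 1$ factor and hence averages to zero regardless of parity; no ``extra contributions from closed walks traversing an odd number of edges'' survive. Consequently the Heilmann--Lieb bound on the roots of the average polynomial, and with it the MSS one-sided conclusion $\lambda_{\max}(A_{\sigma^\ast})\le 2\sqrt{d-1}$, already holds for \emph{all} $d$-regular $G$---this is exactly the content of the theorem following the conjecture in the paper.

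The genuine obstruction is therefore only your second one: for non-bipartite $G$ the spectrum of $A_\sigma$ need not be symmetric about the origin, so a bound on $\lambda_{\max}$ says nothing about $-\lambda_{\min}$, and the two bounds must be achieved by the \emph{same} signature. Passing to $-\sigma^\ast$ controls the other tail but for a different signing. Your suggested remedies (transfer via the bipartite double cover, or a two-sided interlacing family built on $A_\sigma^2$ or a block dilation) are plausible lines of attack, but---as both you and the paper acknowledge---they have not been made to work, and the conjecture remains open.
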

If true, this conjecture would provide a way to construct or show the existence of an infinite family of $d$-regular Ramanujan graphs. One would start with a base graph that is $d$-regular Ramanujan (complete graph $K_{d+1}$ or complete bipartite graph $K_{d,d}$ for example) and then repeatedly apply the result of the conjecture above. Recently, Marcus, Spielman and Srivastava \cite{MSS15} made significant progress towards solving the Bilu-Linial conjecture.
\begin{theorem}
Let $G$ be a connected $d$-regular graph. Then there exists a signature $\sigma$ of $G$ such that the largest eigenvalue of $A_\sigma$ is at most $2\sqrt{d-1}$.
\end{theorem}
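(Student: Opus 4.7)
The plan is to invoke the method of interlacing families of Marcus, Spielman and Srivastava, combined with the classical Heilmann--Lieb bound on the matching polynomial. Index signings by $\sigma\in\{\pm1\}^{E(G)}$ and write $p_\sigma(x):=\det(xI-A_\sigma)$. The goal is to exhibit a single $\sigma$ with $\lambda_{\max}(p_\sigma)\le 2\sqrt{d-1}$.

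First I would compute the expected polynomial $\mu(x):=\mathbb{E}_\sigma\, p_\sigma(x)$, with $\sigma$ uniform on $\{\pm1\}^{E(G)}$. By the Coefficient Theorem (Theorem \ref{Coefficient Theorem}) each coefficient of $p_\sigma$ is a sum over basic figures of terms $(-1)^{p(B)}2^{c(B)}\sigma(B)$. For any basic figure $B$ containing a cycle $C$, the value $\sigma(C)=\prod_{e\in C}\sigma(e)$ averages to zero, so only cycle-free basic figures survive, i.e.\ matchings. Hence $\mu(x)$ is the matching polynomial of $G$, and by Heilmann--Lieb all of its roots are real and lie in $[-2\sqrt{d-1},2\sqrt{d-1}]$.

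Next I would organise $\{p_\sigma\}$ into an interlacing family. Fix an ordering $e_1,\dots,e_m$ of the edges and, for every partial assignment $t=(t_1,\dots,t_k)\in\{\pm1\}^k$, set
\[
p_t(x) \;:=\; \mathbb{E}_\sigma\bigl[p_\sigma(x)\,\bigm|\,\sigma(e_i)=t_i,\ i\le k\bigr].
\]
The MSS abstract lemma asserts that if every $p_t$ is real-rooted and, for every $t$, the two children $p_{t,+1}$ and $p_{t,-1}$ admit a common real-rooted interlacer, then some leaf $\sigma$ satisfies $\lambda_{\max}(p_\sigma)\le\lambda_{\max}(\mu)$. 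By a standard criterion, the common-interlacer condition is equivalent to every convex combination $\lambda p_{t,+1}+(1-\lambda)p_{t,-1}$ being real-rooted. Once this is granted, the theorem follows immediately from step one.

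The heart of the matter, and the main obstacle, is verifying this real-rootedness. The route is multivariate real stability. Write $A_\sigma=\sum_e\sigma(e)\,B_e$ with $B_e=E_{uv}+E_{vu}$ for $e=uv$, introduce a variable $z_e$ per edge, and consider
\[
Q(x,z_1,\dots,z_m) \;:=\; \mathbb{E}_\sigma\det\!\Bigl(xI-\sum_e \sqrt{z_e}\,\sigma(e)\,B_e\Bigr).
\]
The task is to show that $Q$ is a real stable polynomial in $(x,z_1,\dots,z_m)$; the Heilmann--Lieb theorem appears in its multivariate (Borcea--Br\"and\'en) form to supply the base case, and a Jacobi-type determinantal identity lets one average out the $\sigma(e)$ in closed form so that real stability is preserved. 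Real stability is then preserved under the restrictions $z_e\to 1$ and the partial conditioning that produce each $p_t$ and each convex combination $\lambda p_{t,+1}+(1-\lambda)p_{t,-1}$, giving real-rootedness throughout. Combining this with step one produces a signing $\sigma$ with $\lambda_{\max}(A_\sigma)\le 2\sqrt{d-1}$, as desired.
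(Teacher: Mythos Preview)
Your outline matches what the paper does: the paper does not prove this theorem but attributes it to Marcus--Spielman--Srivastava and summarises exactly the three ingredients you invoke (Godsil--Gutman $\Rightarrow$ $\mathbb{E}_\sigma p_\sigma=\mu_G$; Heilmann--Lieb $\Rightarrow$ roots of $\mu_G$ in $[-2\sqrt{d-1},2\sqrt{d-1}]$; the interlacing-family mechanism). Your derivation of $\mathbb{E}_\sigma p_\sigma=\mu_G$ via the Coefficient Theorem is a clean variant of the Godsil--Gutman argument.

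There is, however, a genuine gap at the step you yourself flag as the ``heart of the matter.'' Real stability of the single polynomial $Q=\mathbb{E}_\sigma\det\bigl(xI-\sum_e\sqrt{z_e}\,\sigma(e)B_e\bigr)$ is just the multivariate matching polynomial, and it does \emph{not} yield real-rootedness of the conditional expectations $p_t$: once some edge-signs are frozen, cycle contributions reappear, the object is no longer the matching polynomial of any graph, and multivariate Heilmann--Lieb no longer applies. You cannot recover $p_t$ from $Q$ by stability-preserving operations, because $Q$ has already averaged those signs away. What MSS actually use is the identity
\[
A_\sigma+dI \;=\; \sum_{e=uv}\bigl(e_u+\sigma(e)e_v\bigr)\bigl(e_u+\sigma(e)e_v\bigr)^{\!T},
\]
valid for $d$-regular $G$, which expresses the shifted signed adjacency matrix as a sum of independent random rank-one \emph{positive semidefinite} terms. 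Their mixed-characteristic-polynomial theorem (real stability of $\det\bigl(xI+\sum_e z_e\,v_ev_e^T\bigr)$ together with closure under $(1-\partial_{z_e})$ and specialisation) then gives real-rootedness of every partial expectation $p_t$ and of every convex combination $\lambda p_{t,+1}+(1-\lambda)p_{t,-1}$ simultaneously. That rank-one PSD decomposition is the concrete ``Jacobi-type identity'' your sketch needs; without it the $B_e$ are indefinite and the Borcea--Br\"and\'en stability input is unavailable.
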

As mentioned before, $A_{\sigma}$ may have negative entries and one cannot apply the Perron--Frobenius theorem for it. Therefore, the spectral radius of $A_{\sigma}$ is not always the same as the largest eigenvalue of $A_{\sigma}$. In more informal terms, the Bilu-Linial conjecture is about bounding all the eigenvalues of $A_{\sigma}$ by $-2\sqrt{d-1}$ and $2\sqrt{d-1}$ while the Marcus-Spielman-Srivastava result shows the existence of a signing where all the eigenvalues of $A_{\sigma}$ are at most $2\sqrt{d-1}$. By taking the negative of the signing guaranteed by Marcus-Spielman-Srivastava, one gets a signed adjacency matrix where all eigenvalues are at least $-2\sqrt{d-1}$, of course.

There are several interesting ingredients in the Marcus-Spielman-Srivastava result. The first goes back to Godsil and Gutman \cite{GG} who proved the remarkable result that the average of the characteristic polynomials of the all the signed adjacency matrices of a graph $\Gamma$ equals the matching polynomial of $\Gamma$. This is defined as follows. Define $m_0=1$ and for $k\geq 1$, let $m_k$ denote the number of matchings of $\Gamma$ consisting of exactly $k$ edges. The matching polynomial $\mu_{\Gamma}(x)$ of $\Gamma$ is defined as
\begin{equation}
\mu_{\Gamma}(x)=\sum_{k\geq 0}(-1)^km_kx^{n-2k},
\end{equation}
where $n$ is the number of vertices of $\Gamma$. Heilmann and Lieb \cite{HL} proved the following results regarding the matching polynomial of a graph. See Godsil's book \cite{Gac} for a nice, self-contained exposition of these results.
\begin{theorem}\label{Heil-Lieb}
Let $\Gamma$ be a graph.
\begin{enumerate}
\item Every root of the matching polynomial $\mu_{\Gamma}(x)$ is real.
\item If $\Gamma$ is $d$-regular, then every root of $\mu_{\Gamma}(x)$ has absolute value at most $2\sqrt{d-1}$.
\end{enumerate}
\end{theorem}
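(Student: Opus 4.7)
The plan is to follow the classical path tree approach due to Godsil. First, I would establish two elementary recurrences for the matching polynomial, entirely analogous to Schwenk's formulas in Theorem \ref{schwenk} but without the cycle term:
\begin{align*}
\mu_{\Gamma}(x) &= x\,\mu_{\Gamma - v}(x) - \sum_{u \sim v} \mu_{\Gamma - u - v}(x), \\
\mu_{\Gamma}(x) &= \mu_{\Gamma - e}(x) - \mu_{\Gamma - u - v}(x), \quad e = uv.
\end{align*}
Both follow by partitioning matchings according to whether $v$ (respectively $e$) is covered. I would also record the observation that for a forest $F$ the matching polynomial coincides with the characteristic polynomial $\phi(F, x)$ of its adjacency matrix; this is immediate from Sachs' formula (Theorem \ref{Coefficient Theorem}), since in a forest the only basic figures are disjoint edges.

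Next, introduce the path tree $T(\Gamma, v)$ whose vertices are the self-avoiding walks in $\Gamma$ starting at $v$, with two walks joined when one extends the other by a single edge. The central identity is
\begin{equation*}
\frac{\mu_{\Gamma}(x)}{\mu_{\Gamma - v}(x)} \;=\; \frac{\mu_{T(\Gamma, v)}(x)}{\mu_{T(\Gamma, v) - v}(x)},
\end{equation*}
which I would prove by induction on $|V(\Gamma)|$: apply the vertex recurrence above at $v$ in $\Gamma$, note that deleting $v$ from $\Gamma$ corresponds (for matching-polynomial purposes) to passing to the forest $T(\Gamma, v) - v$, whose components are exactly the path trees $T(\Gamma - v, u)$ for $u \sim v$, and then invoke the induction hypothesis component by component. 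A short interlacing argument applied to this rational identity then forces every root of $\mu_{\Gamma}$ to be a root of $\mu_{T(\Gamma, v)}$. Since $T(\Gamma, v)$ is a forest, $\mu_{T(\Gamma, v)}(x) = \phi(T(\Gamma, v), x)$ is the characteristic polynomial of a real symmetric matrix, so its roots are real. This proves part (1).

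For part (2), I would use that in a $d$-regular graph the path tree $T(\Gamma, v)$ has maximum degree at most $d$: the root $v$ has exactly $d$ neighbors, while every non-root walk $v_0 v_1 \dots v_k$ has exactly one parent and at most $d - 1$ children (its continuations must avoid the already-visited $v_0, \ldots, v_k$, in particular $v_{k-1}$). Hence $T(\Gamma, v)$ embeds as a finite subtree of the infinite $d$-regular tree $\mathbb{T}_d$. By a standard Rayleigh-quotient / monotonicity argument, the spectral radius of any finite induced subtree of $\mathbb{T}_d$ is bounded above by $\rho(\mathbb{T}_d) = 2\sqrt{d-1}$ (the Kesten value). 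Combined with the inclusion of roots established in part (1), this yields $|\lambda| \leq 2\sqrt{d-1}$ for every root $\lambda$ of $\mu_{\Gamma}$.

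The main obstacle I expect is the inductive verification of the path-tree identity: the recurrence on $\Gamma$ produces one term $x\mu_{\Gamma - v}$ and a sum $\sum_{u \sim v} \mu_{\Gamma - u - v}$, whereas the recurrence on $T(\Gamma, v)$ at its root produces one term $x\mu_{T(\Gamma, v) - v}$ and a sum indexed by the children, and one has to check that the factorizations of $\mu_{T(\Gamma, v) - v}$ and $\mu_{T(\Gamma - v, u)}$ match up so that the induction hypothesis can be applied uniformly across the components. Once that bookkeeping is in place, both conclusions fall out quickly from the fact that eigenvalue problems on trees are under excellent control.
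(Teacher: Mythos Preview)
Your proposal is correct and follows the standard path-tree argument of Godsil. The paper does not supply its own proof of this theorem; it merely cites Heilmann--Lieb \cite{HL} for the original result and Godsil's book \cite{Gac} for a self-contained exposition, and that exposition is precisely the path-tree method you have outlined (the recurrence, the identity $\mu_{\Gamma}/\mu_{\Gamma-v}=\mu_{T(\Gamma,v)}/\mu_{T(\Gamma,v)-v}$, the divisibility of $\mu_{\Gamma}$ into $\mu_{T(\Gamma,v)}=\phi(T(\Gamma,v))$, and the degree bound on the path tree yielding the $2\sqrt{d-1}$ estimate).
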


If $\Gamma$ is a $d$-regular graph, then the average of the characteristic polynomials of its signed adjacency matrices equals its matching polynomial $\mu_{\Gamma}(x)$ whose roots are in the desired interval $[-2\sqrt{d-1},2\sqrt{d-1}]$. As Marcus-Spielman-Srivastava point out, just because the average of certain polynomials has roots in a certain interval, does not imply that one of the polynomials has roots in that interval. However, in this situation, the characteristic polynomials of the signed adjacency matrices form an {\em interlacing family of polynomials} (this is a term coined by Marcus-Spielman-Srivastava in \cite{MSS15}). The theory of such polynomials is developed in \cite{MSS15} and it leads to an existence proof that one of the signed adjacency matrices of $G$ has the largest eigenvalue at most $2\sqrt{d-1}$. As mentioned in \cite{MSS15},
\begin{center}
{\em The difference between our result and the original conjecture is that we do not control the smallest new eigenvalue. This is why we consider bipartite graphs.}
\end{center}
Note that the result of Marcus, Spielman and Srivastava \cite{MSS15} implies the existence of an infinite family of $d$-regular bipartite Ramanujan graphs, but it does not provide a recipe for constructing such family. As an amusing exercise, we challenge the readers to solve Problem \ref{minsr} by finding a signature of the Petersen graph (try it without reading \cite{ZasPet}) or of their favorite graph that minimizes the spectral radius.

A weighing matrix of weight $k$ and order $n$ is a square $n\times n$ matrix $W$ with $0,+1,-1$ entries satisfying $WW^{T}=kI_n$. When $k=n$, this is the same as a Hadamard matrix and when $k=n-1$, this is called a conference matrix. Weighing matrices have been well studied in design and coding theory (see \cite{GerSeb} for example). Examining the trace of the square of the signed adjacency matrix, Gregory \cite{DAG} proved the following.
\begin{theorem}\label{dag_thm}
If $\sigma$ is any signature of $\Gamma$, then
\begin{equation}
\rho(\Gamma,\sigma)\geq \sqrt{k}
\end{equation}
where $k$ is the average degree of $\Gamma$. Equality happens if and only if $\Gamma$ is $k$-regular and $A_{\sigma}$ is a symmetric weighing matrix of weight $k$.
\end{theorem}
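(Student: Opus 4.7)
The plan is to read off the inequality from a trace computation on $A_\sigma^2$, and then analyze the equality case by observing that it forces every eigenvalue of $A_\sigma$ to have the same absolute value, which in turn forces $A_\sigma^2$ to be a scalar multiple of the identity.

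First I would set $A=A_\sigma$ and compute $\mathrm{tr}(A^2)$ in two ways. On the combinatorial side, since $a_{ij}\in\{0,\pm 1\}$ and $a_{ij}^2=1$ precisely when $ij$ is an edge of $\Gamma$, we have $\mathrm{tr}(A^2)=\sum_{i,j}a_{ij}^2=2|E(\Gamma)|=nk$, where $n=|V(\Gamma)|$ and $k$ is the average degree. On the spectral side, $\mathrm{tr}(A^2)=\sum_{i=1}^n \lambda_i(\Gamma,\sigma)^2\le n\,\rho(\Gamma,\sigma)^2$. Combining these two expressions yields $nk\le n\,\rho(\Gamma,\sigma)^2$, i.e. $\rho(\Gamma,\sigma)\ge\sqrt{k}$, which is the claimed inequality.

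Next I would handle the equality case. Equality in $\sum_i \lambda_i^2\le n\rho^2$ forces $|\lambda_i(\Gamma,\sigma)|=\sqrt{k}$ for every $i$. Since $A$ is real symmetric, this is equivalent to $A^2=kI_n$. Reading the diagonal of this identity gives $k=(A^2)_{ii}=\sum_j a_{ij}^2=d_i$, so every vertex has degree $k$; hence $\Gamma$ is $k$-regular. The off-diagonal entries give $\sum_j a_{ij}a_{\ell j}=0$ for $i\ne\ell$, so the rows of $A$ are orthogonal, which together with $AA^T=A^2=kI_n$ is exactly the definition of a symmetric weighing matrix of weight $k$.

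Conversely, if $\Gamma$ is $k$-regular and $A_\sigma$ is a symmetric weighing matrix of weight $k$, then $A_\sigma^2=kI_n$, so every eigenvalue of $A_\sigma$ equals $\pm\sqrt{k}$ and $\rho(\Gamma,\sigma)=\sqrt{k}$, closing the equivalence. I do not foresee a major obstacle: the argument is essentially one trace identity plus an eigenvalue bound, and the equality analysis reduces to reading off $A^2=kI$. The only point that requires a little care is the observation that $A^2=kI$ simultaneously encodes $k$-regularity (from the diagonal) and the weighing-matrix condition (from the off-diagonal orthogonality together with symmetry of $A_\sigma$).
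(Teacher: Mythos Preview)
Your proof is correct and follows exactly the approach indicated in the paper, which attributes the result to Gregory and notes only that it is obtained by ``examining the trace of the square of the signed adjacency matrix.'' Your write-up fills in precisely these details, including the equality analysis via $A_\sigma^2=kI$.
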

This result implies that $\rho(K_n,\sigma)\geq \sqrt{n-1}$ for any signature $\sigma$ with equality if and only if a conference matrix of order $n$ exists. By a similar argument, one gets that $\rho(K_{n,n},\sigma)\geq \sqrt{n}$ with equality if and only if there is a Hadamard matrix of order $n$. Note also that when $k=4$, the graphs attaining equality in the previous result are known from McKee and Smyth's work \cite{smyth} (see Theorem \ref{cyclotomic} above). Using McKee and Smyth characterization and the argument below, we can show that the only $3$-regular graph attaining equality in Theorem \ref{dag_thm} is the $3$-dimensional cube.

Let $Q_n$ denote the $n$-dimensional hypercube. Huang \cite{Huang} constructed a signed adjacency matrix $A_n$ of $Q_n$ recursively as follows:
\begin{equation*}
A_1=\begin{bmatrix} 0&1\\1&0\end{bmatrix}
\text{ and } A_{n+1}=\begin{bmatrix}A_n&I_{2^{n}}\\
I_{2^n}&-A_n
\end{bmatrix},
\end{equation*}
for $n\geq 1$. It is not too hard to show that $A_n=nI_{2^n}$ for any $n\geq 1$ and thus, $A_n$ attains equality in Theorem \ref{dag_thm}. We remark that Huang's method can be also used to produce infinite families of regular graphs and signed adjacency matrices attaining equality in Theorem \ref{dag_thm}. If $G$ is a $k$-regular graph of order $N$ with signed adjacency matrix $A_s$ such that $\rho(A_s)=\sqrt{k}$, then define the $k+1$-regular graph $H$ by taking two disjoint copies of $G$ and adding a perfect matching between them and a signed adjacency matrix for $H$ as $B=\begin{bmatrix} A_s&I_N\\I_N&-A_s\end{bmatrix}$. Because $A_s^2=kI_N$, we can get that $B^{2}=(k+1)I_{2N}$. Thus, using any $4$-regular graph $G$ from McKee and Smyth \cite{smyth} (see again Theorem \ref{cyclotomic}) with a signed adjacency matrix $A_s$ satisfying $A_s^2=4I$, one can construct a $5$-regular graph $H$ with signed adjacency matrix $B$ such that $B^2=5I$. The following is a natural question.
\begin{prob}
Are there any other $5$-regular graphs attaining equality in Theorem \ref{dag_thm} ?
\end{prob}

If the regularity assumption on $G$ is dropped, Gregory considered a the following variant of Conjecture \ref{conj-reg}.
\begin{conj}[\cite{DAG}]\label{conj-nonreg}
If $\Delta$ is the largest vertex degree of a nontrivial graph $G$, then there exists a signature $\sigma$ such that $\rho(G,\sigma)<2\sqrt{\Delta-1}$.
\end{conj}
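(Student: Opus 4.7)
The plan is to transplant the Marcus--Spielman--Srivastava (MSS) strategy used for the regular Bilu--Linial conjecture into the non-regular setting, and then look for an extra ingredient that promotes ``largest eigenvalue at most $2\sqrt{\Delta-1}$'' into ``spectral radius strictly less than $2\sqrt{\Delta-1}$''.

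The first step is to recall that the Godsil--Gutman identity \cite{GG} makes no use of regularity: averaging $\det(xI-A_\sigma)$ uniformly over the $2^{|E(G)|}$ signatures of $G$ already yields the matching polynomial $\mu_G(x)$. The next ingredient is a sharp root-location statement for $\mu_G$. Heilmann--Lieb (Theorem~\ref{Heil-Lieb}) gives the bound $2\sqrt{d-1}$ only in the $d$-regular case, but the path-tree identity (see \cite{Gac}) shows that for \emph{any} graph $G$, the roots of $\mu_G$ are a subset of the eigenvalues of a finite path tree $T=T(G,v)$, whose maximum degree is at most $\Delta$. Since any finite tree of maximum degree at most $\Delta$ has spectral radius strictly below the spectral radius $2\sqrt{\Delta-1}$ of the infinite $\Delta$-regular tree, every root of $\mu_G$ lies strictly inside $\bigl(-2\sqrt{\Delta-1},\,2\sqrt{\Delta-1}\bigr)$.

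With the expected polynomial pinned down, I would next organise the $2^{|E(G)|}$ characteristic polynomials $\chi_{A_\sigma}(x)$ into an interlacing family indexed by partial signings, exactly as in \cite{MSS15}: at each internal node of the binary tree of partial signings the two children-averages interlace via the same rank-one computation that works in the regular case, because that computation is purely algebraic and never invokes regularity. The interlacing family theorem of \cite{MSS15} then yields a signature $\sigma^*$ with
\[
\lambda_1(A_{\sigma^*}) \;\le\; \text{largest root of } \mu_G(x) \;<\; 2\sqrt{\Delta-1}.
\]
For bipartite $G$ the spectrum of every $A_\sigma$ is symmetric about $0$, so this one-sided estimate automatically upgrades to $\rho(A_{\sigma^*})<2\sqrt{\Delta-1}$, settling the conjecture for bipartite graphs.

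The main obstacle, and the reason I would not expect this plan to go through as stated in the non-bipartite case, is exactly the same asymmetry that keeps \cite{MSS15} from proving Conjecture~\ref{conj-reg} in full: the interlacing-family argument controls $\lambda_1(A_{\sigma^*})$ but says nothing about $\lambda_n(A_{\sigma^*})$, so one cannot rule out $-\lambda_n(A_{\sigma^*})\ge 2\sqrt{\Delta-1}$ a priori. Two ideas worth attempting are (i) passing to the bipartite double cover $G\times K_2$, which preserves $\Delta$ and is bipartite, and trying to descend an extremal signature back to $G$ --- the difficulty being that signings of $G\times K_2$ form a strictly larger family than pullbacks of signings of $G$, so most MSS-type extremal signatures will not descend; and (ii) designing a two-sided interlacing family whose expected polynomial is a root-squared object such as $\det(x^2 I - A_\sigma^2)$, so that its roots directly constrain $|\lambda|$ rather than $\lambda$. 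A positive resolution of either would immediately imply Conjecture~\ref{conj-reg}, which is evidence that Gregory's non-regular conjecture is at least as hard as the regular Bilu--Linial conjecture.
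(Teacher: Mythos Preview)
The statement you are attempting to prove is listed in the paper as an open \emph{conjecture} (due to Gregory), not a theorem; there is no proof in the paper to compare against. The paper only records Gregory's motivation: the matching polynomial of $G$ is the average over all signatures of $\det(xI-A_\sigma)$, and its roots lie strictly inside $(-2\sqrt{\Delta-1},2\sqrt{\Delta-1})$, so one might hope some individual signature inherits that bound.

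Your proposal recapitulates exactly this motivation and layers the MSS interlacing-family machinery on top. The argument is sound up to and including the bipartite case: the Godsil--Gutman averaging and the MSS interlacing-family construction genuinely make no use of regularity, and the path-tree bound on the roots of $\mu_G$ holds for any $G$ with $\Delta\ge 2$. So what you have actually established is the bipartite special case of Conjecture~\ref{conj-nonreg} (for $\Delta\ge 2$; note that the inequality as stated already fails for $K_2$, so some such restriction is implicit).

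The gap you name in the non-bipartite case is real and is precisely why the conjecture is open. Your two suggested workarounds --- descending a signing from the bipartite double cover, or building a two-sided interlacing family for $\det(x^2I-A_\sigma^2)$ --- are natural, but as you yourself observe, either one would also resolve the full Bilu--Linial conjecture (Conjecture~\ref{conj-reg}), which remains open. So your proposal is not a proof of the stated conjecture; it is a correct reduction of Gregory's conjecture to the same one-sided-versus-two-sided obstruction that prevents \cite{MSS15} from settling Conjecture~\ref{conj-reg}, together with a clean derivation of the bipartite case.
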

Gregory came to the above conjecture by observing that in view of Theorem \ref{Heil-Lieb} the bound in the above conjecture holds for the matching polynomial of $G$ and by noticing that
\[\mu_G(x)=\frac{1}{|C|}\sum_{C \in\mathcal{C}}\phi(G,\sigma;x),\]
where $\mathcal{C}$ is the set of subgraphs of $G$ consisting of cycles and $|C|$ is the number of cycles of $C$. Since the matching polynomial of $G$ is the average of polynomials of signed graphs on $G$, one could expect that there is at least one signature $\bar{\sigma}$ such that $\rho(G,\bar{\sigma})$ does not exceed the spectral radius of $\mu_G(x)$. As observed in \cite{DAG}, for odd unicyclic signed graphs the spectral radius of the matching polynomial is always less than the spectral radius of the corresponding adjacency polynomial, but the conjecture still remains valid. We ask the following question whose affirmative answer would imply Conjecture \cite{DAG}.
\begin{prob}
If $\rho$ is the spectral radius of a connected graph $G$, then is there a signature $\sigma$ such that $\rho(G,\sigma)<2\sqrt{\rho-1}$ ?
\end{prob}

In view of the above facts, we expect that the signature minimizing the spectral radius is the one balancing the contributions of cycles so that the resulting polynomial is as close as possible to the matching polynomial. For example, we can have signatures whose corresponding polynomial equals the matching polynomial, as in the following proposition.

\begin{proposition}\label{cactus}
Let $\Gamma$ be a signed graph consisting of $2k$ odd cycles of pairwise equal length and opposite signs. Then $\rho(\Gamma)<2\sqrt{4k-1}.$
\end{proposition}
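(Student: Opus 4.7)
The strategy is to show that the characteristic polynomial $\phi(\Gamma,x)$ coincides with the matching polynomial $\mu_G(x)$ of the underlying graph $G$, and then apply the Heilmann--Lieb theorem. The structural hypothesis I read into the statement (suggested by the label ``cactus'') is that $\Gamma$ is a bouquet: all $2k$ cycles meet at a single common vertex $v_0$, which therefore has degree $\Delta(G)=4k$.

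I would apply Schwenk's vertex formula (Theorem \ref{schwenk}) at $v_0$:
\[\phi(\Gamma,x)=x\,\phi(\Gamma-v_0,x)-\sum_{u\sim v_0}\phi(\Gamma-u-v_0,x)-2\sum_{C\in\mathcal{C}_{v_0}}\sigma(C)\,\phi(\Gamma-C,x),\]
noting that every cycle of $\Gamma$ passes through $v_0$, so the last sum ranges over all $2k$ cycles. Group them into the $k$ given pairs $(C_i^+,C_i^-)$ of equal length $\ell_i$ and opposite signs. Within a fixed pair, $\Gamma-C_i^+$ and $\Gamma-C_i^-$ are both disjoint unions of paths obtained by deleting $v_0$ and the $\ell_i-1$ remaining vertices of the removed cycle from the $2k-1$ other cycles; because $C_i^+$ and $C_i^-$ share the same length, the multiset of resulting path lengths is the same in both cases, so the two subgraphs are isomorphic unsigned forests. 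Any signed forest is switching equivalent to its all-positive version, so $\phi(\Gamma-C_i^+,x)=\phi(\Gamma-C_i^-,x)$, and together with $\sigma(C_i^+)=-\sigma(C_i^-)$ each pair contributes zero to the last sum.

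The remaining identity $\phi(\Gamma,x)=x\,\phi(\Gamma-v_0,x)-\sum_{u\sim v_0}\phi(\Gamma-u-v_0,x)$ involves only forest terms: both $\Gamma-v_0$ and $\Gamma-u-v_0$ are disjoint unions of paths. By the Coefficient Theorem (Theorem \ref{Coefficient Theorem}), the Sachs expansion on a forest uses only matchings as basic figures (no elementary figure $C_n$ survives), giving $\phi(F,x)=\mu_F(x)$ for any forest $F$. Substituting yields exactly Schwenk's recurrence for the matching polynomial of $G$ at $v_0$, so $\phi(\Gamma,x)=\mu_G(x)$.

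Finally, since $\Delta(G)=4k$, the Heilmann--Lieb theorem (Theorem \ref{Heil-Lieb}, in its general form for arbitrary finite graphs with maximum degree $\Delta$) gives that every root of $\mu_G(x)$ has absolute value strictly less than $2\sqrt{4k-1}$, and hence $\rho(\Gamma)<2\sqrt{4k-1}$. The one delicate point I expect is the strictness of the bound: Theorem \ref{Heil-Lieb} as stated gives a non-strict bound in the $d$-regular setting, but strictness for our (non-regular) finite $G$ follows from the standard path-tree divisibility $\mu_G(x)\mid\phi_T(x)$ for the path tree $T$ of $G$, combined with the fact that the value $2\sqrt{\Delta-1}$ is attained only in the limit by the infinite $\Delta$-regular tree, so it is never reached by a finite tree.
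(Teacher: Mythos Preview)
Your proposal is correct and follows precisely the route the paper indicates: the proposition is stated there without a detailed proof, but the surrounding text makes explicit that ``we can have signatures whose corresponding polynomial equals the matching polynomial,'' and the bound then comes from Gregory's observation combined with the Heilmann--Lieb theorem. Your use of Schwenk's formula at the central vertex to exhibit the pairwise cancellation of cycle terms, followed by identification with the matching-polynomial recurrence on the residual forests, is exactly the intended mechanism, and your reading of the bouquet structure (forcing $\Delta=4k$) is the only one consistent with the stated bound.
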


Is the signature in Proposition \ref{cactus} the one minimizing the spectral radius? We leave this as an open problem (see also \cite{zascycle}).

We conclude this section by observing that for a general graph, it is not known whether Problem \ref{minsr} is NP-hard or not. However, progress is made in \cite{CCCKK} where the latter mentioned problem is shown to be NP-hard when restricted to arbitrary symmetric matrices. Furthermore, the problems described in this subsection can be considered in terms of the largest eigenvalue $\lambda_1$, instead of the spectral radius.

\subsection{Spectral determination problems for signed graphs}

A graph is said to be determined by its (adjacency) spectrum if cospectral graphs are isomorphic graphs. It is well-known that in general the spectrum does not determine the graph, and this problem has pushed a lot of research in spectral graph theory, also with respect to other graph matrices. In general, we can say that there are three kinds of research lines: 1) Identify, if any, cospectral non-isomorphic graphs for a given class of graphs; 2) Routines to build cospectral non isomorphic graphs (e.g., Godsil-McKay switching); 3) Find conditions such that the corresponding graphs are determined by their spectrum.

Evidently, the same problems can be considered for signed graphs with respect to switching isomorphism. On the other hand, when considering signed graphs, there are many more possibilities for getting pairs of switching non-isomorphic cospectral signed graphs. For example, the paths and the cycles are examples of graphs determined by their spectrum, but the same graphs as signed ones are no longer determined by their spectrum since they admit cospectral but non-isomorphic mates \cite{cycle,path}.

Hence, the spectrum of the adjacency matrix of signed graphs has less control on the graph invariants. In view of the spectral moments we get the following proposition:

\begin{proposition}
From the eigenvalues of a signed graph $\Gamma$ we obtain the following invariants:
\begin{itemize}
  \item number of vertices and edges;
  \item the difference between the number of positive and negative triangles ($\frac{1}{6}\sum{\lambda_i^3}$);
  \item the difference between the number of positive and negative closed walks of length $p$ ($\sum{\lambda_i^p}$)
\end{itemize}
\end{proposition}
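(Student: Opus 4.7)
The plan is to derive each bullet directly from the Spectral Moments Theorem (Theorem \ref{Spectral_moments}), using the elementary fact that the number of vertices $n$ is read off from the matrix order, and then analyzing which closed walks of length $k$ exist for small $k$ and what their signs must be.

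First I would dispose of the number of vertices: since the adjacency matrix $A(\Gamma)$ is $n \times n$, the size of the spectrum is exactly $n$. For the number of edges, I would apply Theorem \ref{Spectral_moments} with $k=2$. Any closed walk of length $2$ starting at a vertex $v$ has the form $v \to u \to v$ for some neighbor $u$, and its sign is $\sigma(vu)\sigma(uv) = \sigma(vu)^2 = +1$. Hence every closed walk of length $2$ is positive, and each edge $uv$ contributes exactly two such walks (one based at $u$, one based at $v$). Consequently
\[
\sum_{i=1}^{n}\lambda_i^2 \;=\; W^{\pm}_2 \;=\; 2|E(\Gamma)|,
\]
so $|E(\Gamma)| = \tfrac{1}{2}\sum_i \lambda_i^2$ is recovered from the spectrum.

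Next, for the triangle count I would apply Theorem \ref{Spectral_moments} with $k=3$. A closed walk of length $3$ based at a vertex $v$ is necessarily a traversal $v \to u \to w \to v$ of a triangle $\{u,v,w\}$ in the underlying graph. Each such triangle gives rise to $3 \cdot 2 = 6$ closed walks of length $3$ (three choices of starting vertex and two choices of orientation), and the sign of every one of them equals the product $\sigma(vu)\sigma(uw)\sigma(wv)$, i.e.\ the sign of the triangle. Letting $t^+$ and $t^-$ denote the numbers of positive and negative triangles, Theorem \ref{Spectral_moments} yields
\[
\sum_{i=1}^{n}\lambda_i^3 \;=\; W^{\pm}_3 \;=\; 6\bigl(t^+ - t^-\bigr),
\]
which gives $t^+ - t^- = \tfrac{1}{6}\sum_i \lambda_i^3$.

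The final bullet is nothing but Theorem \ref{Spectral_moments} itself, so no further work is needed. The only places where a small combinatorial check is required are the claims that every length-$2$ closed walk is positive and that every length-$3$ closed walk traces a triangle with the triangle's sign; both are routine. There is no genuine obstacle here, the proposition being essentially a ``dictionary'' entry translating the spectral moments of small order into combinatorial invariants, completely parallel to the unsigned case except that raw counts are replaced by signed differences.
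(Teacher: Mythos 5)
Your proposal is correct and follows exactly the route the paper intends: the proposition is stated as an immediate consequence of the Spectral Moments Theorem (Theorem \ref{Spectral_moments}), and your computations for $k=2$ (every length-$2$ closed walk is positive, giving $2|E|$) and $k=3$ (the factor $6$ from starting vertex and orientation) are precisely the routine checks the paper leaves implicit.
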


Contrarily to unsigned graphs, from the spectrum we cannot decide any more whether the graph has some kind of signed regularity, or it is sign-symmetric. For the former, we note that the co-regular signed graph $(C_6,+)$ (it is a regular graph with net regular signature) is cospectral with $P_2\cup \tilde{Q}_4$ (cf. Fig. \ref{C6vsP2Q4}). For the latter, we observe that the signed graphs $A_1$ and $A_2$ are cospectral but $A_1$ is not sign-symmetric while $A_2$ is sign-symmetric.

\unitlength=1mm
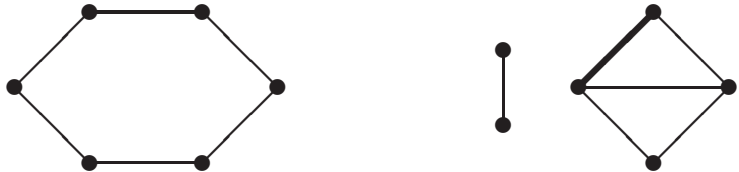
\begin{figure}[h]
  \centering
\begin{picture}(95,35)(,-5)
\thicklines
\put(0,15){\circle*{2.0}}\put(10,5){\circle*{2.0}}\put(10,25){\circle*{2.0}}\put(25,5){\circle*{2.0}}\put(25,25){\circle*{2.0}}\put(35,15){\circle*{2.0}}

\put(65,10){\circle*{2.0}}\put(65,20){\circle*{2.0}} \put(75,15){\circle*{2.0}}\put(85,5){\circle*{2.0}} \put(85,25){\circle*{2.0}}\put(95,15){\circle*{2.0}}
\put(0,15){\line(1,1){10}} \put(0,15){\line(1,-1){10}} \put(10,5){\line(1,0){15}} \put(10,25){\line(1,0){15}} \put(35,15){\line(-1,-1){10}} \put(35,15){\line(-1,1){10}}
\put(65,10){\line(0,1){10}} \put(75,15){\line(1,-1){10}} \put(75,15){\line(1,1){10}} \put(75,15.2){\line(1,1){10}} \put(75,14.8){\line(1,1){10}}\put(75,15.4){\line(1,1){10}} \put(75,14.6){\line(1,1){10}} \put(75,15){\line(1,0){20}} \put(95,15){\line(-1,-1){10}} \put(95,15){\line(-1,1){10}}
\end{picture}
\caption{The cospectral pair $(C_6,+)$ and $P_2\cup \tilde{Q}_4$.}
\label{C6vsP2Q4}
\end{figure}

\subsection{Operations on signed graphs}

In graph theory we can find several operations and operators acting on graphs. For example, we have the complement of a graph, the line graph, the subdivision graph and several kind of products as the cartesian product, and so on. Most of them have been ported to the level of signed graph, in a way that the resulting underlying graph is the same obtained from the theory of unsigned graphs, while the signatures are given in order to preserve the balance property, signed regularities, and in many cases also the corresponding spectra. However, there are a few operations and operators which do not yet have a, satisfactory, 'signed' variant.

One operator that is missing in the signed graph theory is the complement of a signed graph. The complement of signed graph should be a signed graph whose underlying graph is the usual complement, however the signature has not been defined in a satisfactory way yet. What we can ask from the signature of the complement of a signed graph? One could expect some nice features on the spectrum, as for the Laplacian, so that the spectra of the two signed graphs $\Gamma$ and $\bar{\Gamma}$ are complementary to the spectrum of the obtained complete graph.

\begin{prob}
Given a graph $\Gamma=(G,\sigma)$, define the complement $\bar{\Gamma}=(\bar{G},\bar{\sigma})$ such that there are nice (spectral) properties derived from the complete signed graph $\Gamma\cup\bar{\Gamma}$.
\end{prob}

In terms of operators, in the literature we have nice definitions for subdivision and line graphs of signed graphs \cite{besi,zas1}, but for example a satisfactory definition of signed total graph of a signed graph has not yet been provided (but it is considered in the forthcoming paper \cite{bestaza}).

From the product viewpoint, most standard signed graph products have been defined and considered in \cite{gehaza} and the more general NEPS (or, Cvetkovi\'c product) of signed graphs have been there considered. In \cite{germa66} the lexicographic product was also considered, but the given definition is not stable under the equivalence switching classes.

However, there are some graph products which do not have a signed variant yet. As an example, we mention here the wreath product and the co-normal product.

\subsection{Seidel matrices}

The Seidel matrix of a graph $\Gamma$ on $n$ vertices is the adjacency matrix of a signed complete graph $K_n$ in which the edges of $\Gamma$ are negative $(-1)$ and the edges not in $\Gamma$ are positive $(+1)$. More formally, the Seidel matrix $S(\Gamma)$ equals $J_n-I_n-2A(\Gamma)$. Zaslavsky \cite{zas1} confesses that
\begin{center}
{\em This fact inspired my work on adjacency matrices of signed graphs.}
\end{center}
Seidel matrices were introduced by Van Lint and Seidel \cite{VLS} and studied by many people due to their interesting properties and connections to equiangular lines, two-graphs, strongly regular graphs, mutually unbiased bases and so on (see \cite[Section 10.6]{BH} and  \cite{BDKS,GKMS,Sei1} for example). The connection between Seidel matrices and equiangular lines is perhaps best summarized in \cite[p.161]{BH}:
\begin{center}
{\em To find large sets of equiangular lines, one has to find large graphs where the smallest Seidel eigenvalue has large multiplicity.}
\end{center}
Let $d$ be a natural number and $\R^d$ denote the Euclidean $d$-dimensional space with the usual inner product $\langle,\rangle$. A set of $n\geq 1$ lines (represented by unit vectors) $v_1,\dots,v_n\in \R^d$ is called equiangular if there is a constant $\alpha> 0$ such that $\langle v_i,v_j\rangle=\pm \alpha$ for any $1\leq i<j\leq n$. For given $\alpha$, let $N_{\alpha}(d)$ be the maximum $n$ with this property.
The Gram matrix $G$ of the vectors $v_1,\dots,v_n$ is the $n\times n$ matrix whose $(i,j)$-th entry equals $\langle v_i,v_j\rangle$. The matrix $S:=(G-I)/\alpha$ is a symmetric matrix with $0$ diagonal and $\pm 1$ entries off-diagonal. It is therefore the Seidel matrix of some graph $\Gamma$ and contains all the relevant parameters of the equiangular line system. The multiplicity of the smallest eigenvalue $-1/\alpha$ of $S$ is the smallest dimension $d$ where the line system can be embedded into $\R^{d}$.

Lemmens and Seidel \cite{LS} (see also \cite{BDKS,GKMS,JP,LY,Neumaier} for more details) showed that $N_{1/3}(d)=2d-2$ for $d$ sufficiently large and made the following conjecture.
\begin{conj}
If $23\leq d\leq 185$, $N_{1/5}(d)=276$. If $d\geq 185$, then $N_{1/5}(d)=\lfloor 3(d-1)/2\rfloor$.
\end{conj}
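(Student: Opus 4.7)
The plan is to translate the Lemmens-Seidel conjecture into a question about Seidel matrices (which, as highlighted just above, are exactly adjacency matrices of signed complete graphs) and deploy the spectral tools developed in the paper. A set of $n$ equiangular lines in $\R^{d}$ at common angle $\arccos(1/5)$ corresponds to a Seidel matrix $S$ of order $n$ whose smallest eigenvalue is $-5$ with multiplicity at least $n-d$; equivalently, $I+\tfrac{1}{5}S\succeq 0$ has rank at most $d$. The task is to bound the largest such $n$ and to exhibit matching constructions.

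For the upper bound, I would use a pillar decomposition together with the cyclotomic classification. After a suitable switching (Section 1, which preserves the Seidel spectrum via signature similarity), fix a vertex $v_{1}$ and partition the remaining vertices into a positive pillar $V_{+}$ and negative pillar $V_{-}$ according to the sign of the Seidel entry with $v_{1}$. A matrix calculation using the rank-$d$ constraint on $S+5I$ (its columns are Gram vectors of squared norm $5$ in $\R^{d}$) produces quadratic inequalities relating the sizes of the pillars and the spectral radius of the induced signed graph on each pillar. Combined with Theorem \ref{spread}, this forces the local signed graphs around $v_{1}$ to have spectral radius at most $2$, and Theorem \ref{cyclotomic} then identifies them as induced subgraphs of the maximal cyclotomic signed graphs $T_{2k}$, $S_{14}$, or $S_{16}$. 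Bounding the sizes of such embeddings and feeding the estimate back through Cauchy interlacing (Theorem \ref{interlacing vertex}) applied across the two pillars should yield $n\le\lfloor 3(d-1)/2\rfloor$ once $d$ is past the McLaughlin threshold.

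For the matching lower bound, the regime $23\le d\le 185$ is realized by the classical Seidel matrix associated with the two-graph on $276$ points arising from the complement of the McLaughlin graph, whose smallest eigenvalue $-5$ has multiplicity $253$; this Gram-embeds $276$ equiangular lines into $\R^{23}$ and hence into every $\R^{d}$ with $23\le d\le 185$, matching the value $276=\lfloor 3\cdot 184/2\rfloor$. For $d\ge 185$ I would build $\lfloor 3(d-1)/2\rfloor$ lines inductively by pillar-gluing: start from a small base configuration and adjoin triples of vectors per new coordinate direction, checking at each step that all new Gram entries take values $\pm 1/5$.

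The main obstacle is the precise crossover at $d=185$. A pillar-and-spectral-radius argument of the type above is likely to deliver only $n\le\tfrac{3(d-1)}{2}+O(1)$, i.e., a bound that is sharp only asymptotically. Pinning down the exact threshold requires a finite case analysis of Seidel matrices on at most $276$ vertices whose smallest eigenvalue is $-5$, ruling out every sporadic configuration in the window $23\le d\le 184$; this combinatorial (and likely computer-assisted) step, rather than any single spectral inequality, is what makes the full conjecture substantially harder than its asymptotic analogue.
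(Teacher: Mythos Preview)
The statement you are trying to prove is presented in the paper as an \emph{open conjecture} (the Lemmens--Seidel conjecture), not as a theorem with a proof. The paper explicitly says that Lemmens and Seidel ``made the following conjecture'', notes that the asymptotic case $N_{1/5}(d)=\lfloor 3(d-1)/2\rfloor$ for $d$ sufficiently large was proved by Neumaier and by Greaves--Koolen--Munemasa--Sz\"{o}ll\H{o}si, and that Lin and Yu made recent progress, but it does not claim a full proof. So there is no ``paper's own proof'' to compare against; your proposal is a research outline for an open problem, and indeed you concede in your final paragraph that your method is likely to yield only $n\le \tfrac{3(d-1)}{2}+O(1)$ and that the exact crossover would require a separate (possibly computer-assisted) finite analysis.

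Beyond that framing issue, two of the spectral steps in your outline do not work as stated. First, the appeal to Theorem~\ref{spread} is in the wrong direction: that theorem bounds $\rho(G,\sigma)$ \emph{above} by $\rho(G)$, so it cannot force the induced signed graphs on the pillars to have spectral radius at most $2$; you would need a lower-bound mechanism coming from the rank constraint, not Theorem~\ref{spread}. Second, even if you could cap the pillar spectral radius at $2$, Theorem~\ref{cyclotomic} classifies the \emph{maximal} cyclotomic signed graphs, and the pillar subgraphs of a Seidel matrix with least eigenvalue $-5$ are not signed complete graphs with spectrum in $[-2,2]$; the actual Lemmens--Seidel pillar argument works with the unsigned graph structure inside a pillar (where edges record the sign pattern relative to the base vertex) and uses Ramsey/combinatorial bounds rather than the cyclotomic list. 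So while the high-level shape (Seidel matrix $\leftrightarrow$ Gram matrix, pillar decomposition, McLaughlin construction for the lower bound) is correct, the specific spectral tools from this paper that you invoke do not plug into the argument in the way you describe.
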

The fact that $N_{1/5}(d)=\lfloor 3(d-1)/2\rfloor$ for $d$ sufficiently large was proved by Neumaier \cite{Neumaier} and Greaves, Koolen, Munemasa and Sz\"{o}ll\H{o}si \cite{GKMS}. Recently, Lin and Yu \cite{LY} made progress in this conjecture by proving some
claims from Lemmens and Seidel \cite{LS}. Note that these results can be reformulated in terms of Seidel matrices with smallest eigenvalue $-5$. Seidel and Tsaranov \cite{SeiTsa} classified the Seidel matrices with smallest eigenvalue $-3$.

Neumann (cf. \cite[Theorem 3.4]{LS}) proved that if $N_{\alpha}(d)\geq 2d$, then $1/\alpha$ is an odd integer. Bukh \cite{Bukh} proved that $N_{\alpha}(d)\leq c_{\alpha}d$, where $c_{\alpha}$ is a constant depending only on $\alpha$. Balla, Dr\"{a}xler, Keevash and Sudakov \cite{BDKS} improved this bound and showed that for $d$ sufficiently large and $\alpha\neq 1/3$, $N_{\alpha}(d)\leq 1.93d$. Jiang and Polyanskii \cite{JP} further improved these results and showed that if $\alpha \notin \{1/3,1/5,1/(1+2\sqrt{2})\}$, then $N_{\alpha}(d)\leq 1.49d$ for $d$ sufficiently large. When $1/\alpha$ is an odd integer, Glazyrin and Yu \cite{GY} obtained a general bound $N_{\alpha}(d)\leq \left(2\alpha^2/3+4/7\right)d+2$ for all $n$.

Bukh \cite{Bukh} and also, Balla, Dr\"{a}xler, Keevash and Sudakov \cite{BDKS} conjecture the following.
\begin{conj}
If $r\geq 2$ is an integer, then $N_{\frac{1}{2r-1}}(d)=\frac{r(n-1)}{r-1}+O(1)$ for $n$ sufficiently large.
\end{conj}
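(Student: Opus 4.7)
The plan is to attack this conjecture via the Seidel-matrix framework for equiangular lines, combined with an eigenvalue-multiplicity bound for bounded-degree graphs. Given $n$ equiangular lines in $\mathbb{R}^d$ with common angle $\arccos\!\bigl(\tfrac{1}{2r-1}\bigr)$, pass to the signed complete graph $(K_n,\sigma)$ whose Seidel matrix $S$ records the signs of the normalized inner products. The Gram matrix $I+\tfrac{1}{2r-1}S$ is positive semidefinite of rank at most $d$, which is equivalent to saying that $-(2r-1)$ is a Seidel eigenvalue of multiplicity at least $n-d$. The conjectured bound $n\leq \tfrac{r(d-1)}{r-1}+O(1)$ is equivalent to bounding this multiplicity by $\tfrac{n}{r}+O(1)$.

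First I would extract qualitative structure from the condition $\lambda_{\min}(S)=-(2r-1)$. Writing $S=J-I-2A(\Gamma)$, this spectral constraint on the associated graph $\Gamma$, after restriction to the hyperplane orthogonal to the all-ones vector, forces a two-sided control on the adjacency eigenvalues of $\Gamma$. Following the line of attack of Lemmens--Seidel and Neumaier, a pigeonhole among rows of $S+(2r-1)I$ produces a large clique of lines with common inner product $+\tfrac{1}{2r-1}$ in the complement of $\Gamma$. Iterating this extraction and applying Theorem~\ref{interlacing vertex} at each step, one shows that the complement of $\Gamma$ is within $O(1)$ vertices of being the disjoint union of exactly $r$ nearly equal cliques; computing the Seidel spectrum of this idealized structure in block form yields smallest eigenvalue $-(2r-1)$ of multiplicity precisely $\tfrac{n}{r}-1+O(1)$ for appropriate clique sizes, which simultaneously matches the conjectured lower bound via an explicit construction.

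The quantitative heart of the argument is a sublinear multiplicity estimate: for every $\Delta$ there should exist $f(\Delta,N)=o(N)$ such that any fixed adjacency eigenvalue of a connected graph on $N$ vertices with maximum degree at most $\Delta$ has multiplicity at most $f(\Delta,N)$. Apply this to the auxiliary graph obtained from $\Gamma$ by contracting each candidate clique to a point and examining inter-clique adjacencies. Any deviation from the idealized $r$-clique decomposition then costs $\Theta(N)$ in the dimension-$d$ budget, and comparing with the exact multiplicity $n-d$ of $-(2r-1)$ forces the deviation to be bounded by an absolute constant, producing the $O(1)$ error.

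The principal obstacle is obtaining the multiplicity bound in a form strong enough to yield exactly $O(1)$ error rather than $o(n)$. The earlier linear bounds of the form $c_\alpha d$, the $1.93\,d$ bound of Balla--Dr\"{a}xler--Keevash--Sudakov, and the $1.49\,d$ bound of Jiang--Polyanskii all stem from insufficient control on this multiplicity, and pinning down the exact coefficient $\tfrac{r}{r-1}$ for every $r\geq 2$ requires a \emph{quantitative} decomposition theorem for graphs with prescribed spectrum together with a careful inductive peeling of cliques that preserves the Seidel rank condition through each step. Reconciling the combinatorial extraction of $r$ cliques in Step~2 with a multiplicity bound sharp enough to give a true $O(1)$ (rather than $o(n)$) remainder is where I expect the main difficulty to lie; for small $r$ the constructions of Lemmens--Seidel ($r=2$) and Neumaier--Greaves--Koolen--Munemasa--Sz\"oll\H{o}si ($r=3$) provide templates, but extending the structural analysis uniformly in $r$ appears to be essential.
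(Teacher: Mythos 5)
The statement you are addressing is presented in the paper as an \emph{open conjecture} (attributed to Bukh and to Balla, Dr\"{a}xler, Keevash and Sudakov); the paper gives no proof of it, so there is nothing on the paper's side to compare your argument with. Judged on its own terms, your proposal is a research programme rather than a proof, and you have correctly located --- but not closed --- the gap that keeps the conjecture open. The reduction in your first paragraph is sound: positive semidefiniteness and rank at most $d$ of the Gram matrix $I+\tfrac{1}{2r-1}S$ do show that $-(2r-1)$ is a Seidel eigenvalue of multiplicity at least $n-d$, and bounding that multiplicity by $n/r+O(1)$ is equivalent to the conjectured upper bound; the matching lower bound from suitably signed disjoint cliques is likewise standard. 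Everything after that, however, is asserted rather than proved.

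Two steps in particular would fail as written. First, the claim that iterating the Lemmens--Seidel pigeonhole together with vertex interlacing forces the complement of $\Gamma$ to be within $O(1)$ vertices of a disjoint union of exactly $r$ nearly equal cliques is essentially the full content of the conjecture: the pigeonhole gives you one large clique, but nothing in your sketch controls how the remaining vertices attach to it, and this is precisely the point at which the classical approach stalls already for small $r$. Second, the ``quantitative heart'' of your argument --- a bound $f(\Delta,N)=o(N)$ on the multiplicity of a fixed adjacency eigenvalue of a connected graph on $N$ vertices with maximum degree at most $\Delta$ --- is introduced with ``there should exist'' and never established; no such bound was available at the time of this survey, and proving one is a substantial theorem in its own right, not a lemma one can cite in passing. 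Even granting it, an $o(N)$ multiplicity bound does not automatically yield an $O(1)$ error term: your assertion that any deviation from the idealized $r$-clique decomposition ``costs $\Theta(N)$'' in the dimension budget is exactly the quantitative structural statement that would need to be proved, and it is the hardest part. In short, the proposal identifies the right objects and the right bottleneck, but it does not constitute a proof, and the conjecture remains open as far as this argument is concerned.
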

When $1/\alpha$ is not a totally real algebraic integer, then $N_{\alpha}(d)=d$. Jiang and Polyanskii \cite{JP} studied the set
$T=\{\alpha \mid \alpha \in (0,1), \limsup_{d \rightarrow \infty}N_{\alpha}(d)/d >1\}$ and showed that the closure of $T$ contains the closed interval $[0, 1/\sqrt{\sqrt{5} +2}]$ using results of Shearer \cite{Shearer} on the spectral radius of unsigned graphs.

Seidel matrices with two distinct eigenvalues are equivalent to regular two-graphs and correspond to equality in the {\em relative bound} (see \cite[Section 10.3]{BH} or \cite{GKMS} for example). It is natural to study the combinatorial and spectral properties of Seidel matrices with three distinct eigenvalues, especially since for various large systems of equiangular lines, the respective Seidel matrices have this property. Recent work in this direction has been done by Greaves, Koolen, Munemasa and Sz\"{o}ll\H{o}si \cite{GKMS} who determined several properties of such Seidel matrices and raised the following interesting problem.
\begin{prob}
Find a combinatorial interpretation of Seidel matrices with three distinct eigenvalues.
\end{prob}
A classification for the class of Seidel matrices with exactly three distinct eigenvalues of order less than $23$ was obtained by Sz\"{o}ll\H{o}si and \"{O}sterg\aa rd \cite{SO}. Several parameter sets for which existence is not known were also compiled in \cite{GKMS}. Greaves \cite{Gre18} studied Seidel matrices with three distinct eigenvalues, observed that there is only one Seidel matrix of order at most $12$ having three distinct eigenvalues, but its switching class does not contain any regular graphs. In \cite{Gre18}, he also showed that if the Seidel matrix $S$ of a graph $\Gamma$ has three distinct eigenvalues of which at least one is simple, then the switching class of
$\Gamma$ contains a strongly regular graph. The following question was posed in \cite{Gre18}.
\begin{prob}
Do there exist any Seidel matrices of order at least $14$ with precisely three distinct eigenvalues whose switching class does not contain a regular graph ?
\end{prob}
The switching class of conference graph and isolated vertex has two distinct eigenvalues. If these two eigenvalues are not rational, then the
switching class does not contain a regular graph. So we suspect that there must be infinitely many graphs whose Seidel matrix has exactly three distinct eigenvalues and its switching graph does not contain a regular graph. A related problem also appears in \cite{Gre18}.
\begin{prob}
Does every Seidel matrix with precisely three distinct rational eigenvalues contain a regular graph in its switching class ?
\end{prob}

The Seidel energy $\mathcal{S}(\Gamma)$ of a graph $\Gamma$ is the sum of absolute values of the eigenvalues of the Seidel matrix $S$ of $\Gamma$. This parameter was introduced by Haemers \cite{Hae2} who proved that $\mathcal{S}(\Gamma)\leq n\sqrt{n-1}$ for any graph $\Gamma$ of order $n$ with equality if and only $S$ is a conference matrix. Haemers \cite{Hae2} also conjectured that the complete graphs on $n$ vertices (and the graphs switching equivalent to them) minimize the Seidel energy.
\begin{conj}
If $\Gamma$ is a graph on $n$ vertices, then $\mathcal{S}(\Gamma)\geq \mathcal{S}(K_n)=2(n-1)$.
\end{conj}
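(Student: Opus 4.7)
The plan is to adapt the Coulson integral representation of the trace norm of a real symmetric matrix. Write $\phi_\Gamma(x)=\det(xI-S(\Gamma))$ for the Seidel characteristic polynomial of $\Gamma$ and $\mu_1,\ldots,\mu_n$ for its roots. Using the elementary identity $\int_{-\infty}^{\infty}\log(1+a^{2}/y^{2})\,dy=2\pi|a|$, one has
\[
\mathcal{S}(\Gamma)=\sum_{j=1}^{n}|\mu_{j}|=\frac{1}{2\pi}\int_{-\infty}^{\infty}\log\frac{|\phi_\Gamma(iy)|^{2}}{y^{2n}}\,dy.
\]
For $K_{n}$ the Seidel polynomial is $\phi_{K_{n}}(x)=(x-1)^{n-1}(x+n-1)$, and the same formula recovers $\mathcal{S}(K_{n})=2(n-1)$. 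Subtracting,
\[
\mathcal{S}(\Gamma)-2(n-1)=\frac{1}{2\pi}\int_{-\infty}^{\infty}\log\frac{|\phi_\Gamma(iy)|^{2}}{|\phi_{K_{n}}(iy)|^{2}}\,dy,
\]
so it suffices to show that the integral is non-negative; a clean sufficient condition is the pointwise bound $|\phi_\Gamma(iy)|^{2}\ge|\phi_{K_{n}}(iy)|^{2}$ for every real $y$.

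Setting $z=y^{2}$ and $\alpha_{j}=\mu_{j}^{2}\ge 0$, this pointwise bound becomes the polynomial inequality
\[
\prod_{j=1}^{n}(z+\alpha_{j})\;\ge\;(z+1)^{n-1}\bigl(z+(n-1)^{2}\bigr)\qquad(z\ge 0),
\]
equivalent to the coefficientwise inequalities
\[
e_{k}(\alpha_{1},\ldots,\alpha_{n})\;\ge\;\binom{n-1}{k}+(n-1)^{2}\binom{n-1}{k-1}\qquad(0\le k\le n).
\]
The cases $k=0,1$ hold with equality because $e_{0}=1$ and $e_{1}=\operatorname{tr}(S(\Gamma)^{2})=n(n-1)$ depend only on $n$, so genuine work is required only for $k\ge 2$.

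The key combinatorial ingredient is a moment bound. Writing $p_{j}:=\operatorname{tr}(S(\Gamma)^{2j})=\sum_{i}\mu_{i}^{2j}$ and applying Theorem~\ref{Spectral_moments} to the signed complete graph $(K_{n},\sigma_\Gamma)$ whose negative edges are exactly the edges of $\Gamma$, each $p_{j}$ equals the signed count of closed walks of length $2j$ in $(K_{n},\sigma_\Gamma)$. Since every signed walk contributes at most $+1$ and since the all-negative signature (corresponding to $\Gamma=K_{n}$) makes every closed walk of even length positive, one obtains the universal bound $p_{j}(\Gamma)\le p_{j}(K_{n})$, with equality exactly on the switching class of $K_{n}$. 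For $k=2$ this gives $e_{2}(\Gamma)=(p_{1}^{2}-p_{2}(\Gamma))/2\ge e_{2}(K_{n})$ immediately; combined with the Coefficient Theorem (Theorem~\ref{Coefficient Theorem}), which expands every $e_{k}(\alpha)$ as a signed sum over pairs of basic figures of $K_{n}$, this should underpin the coefficient inequality in general.

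The main obstacle is the alternating structure of Newton's identities. Although the moment bound $p_{j}(\Gamma)\le p_{j}(K_{n})$ is uniform in $j$, already $e_{3}=\tfrac{1}{6}p_{1}^{3}-\tfrac{1}{2}p_{1}p_{2}+\tfrac{1}{3}p_{3}$ mixes terms with alternating signs, so a single moment bound does not mechanically imply the coefficient inequality for $k\ge 3$. Two plausible routes to handle this are: first, to exploit Seidel-switching invariance of $\mathcal{S}(\Gamma)$ and reduce $\Gamma$ to a canonical representative of its switching class before comparing coefficients; second, to develop an interlacing-family argument in the style of Marcus--Spielman--Srivastava (Section~3.5) for the family $\{\phi_{S(K_{n},\sigma)}\}_\sigma$ of Seidel characteristic polynomials indexed by signatures of $K_{n}$, showing that the all-negative signature is coefficientwise extremal. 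Even should the pointwise inequality fail in isolated directions, the weaker integral inequality remains the final target and may still be reachable by a more refined harmonic-analytic comparison based on the same moment data.
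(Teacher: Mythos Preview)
The paper does not supply its own proof of this statement: it records Haemers' conjecture, notes Ghorbani's partial result for $|\det S(\Gamma)|\ge n-1$, and cites Akbari, Einollahzadeh, Karkhaneei and Nematollahi for the full resolution. So there is no in-paper argument to match; your proposal has to be judged on its own merits and against the published proof.

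Your Coulson-integral reduction is correct and is exactly the framework of the Akbari et al.\ proof: one rewrites $\mathcal{S}(\Gamma)-2(n-1)$ as the integral of $\log\bigl(|\phi_\Gamma(iy)|^2/|\phi_{K_n}(iy)|^2\bigr)$, and it suffices to prove the coefficientwise inequality $e_k(\mu_1^2,\ldots,\mu_n^2)\ge\binom{n-1}{k}+(n-1)^2\binom{n-1}{k-1}$ for all $k$. Your verification for $k\le 2$ via the moment bound $p_j(\Gamma)\le p_j(K_n)$ is fine (a small slip: equality in that bound holds not only on the switching class of $K_n$ but also on that of its complement, since $S(\overline{\Gamma})=-S(\Gamma)$).

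The genuine gap is exactly where you locate it: for $k\ge 3$ the alternating Newton identities mean the single chain of inequalities $p_j(\Gamma)\le p_j(K_n)$ cannot be pushed through, and neither of your two suggested routes closes it. Seidel-switching invariance buys nothing, because all of the quantities in play --- $\mathcal{S}(\Gamma)$, the $p_j$, the $e_k$, and hence the entire inequality --- are already switching invariants; passing to a canonical representative leaves the problem untouched. The Marcus--Spielman--Srivastava machinery is designed to exhibit \emph{one} member of a family whose largest root is small, not to show that a \emph{fixed} member is coefficientwise extremal across the whole family, which is what you need here. The actual proof proceeds quite differently: it realises $e_k(\mu_1^2,\ldots,\mu_n^2)$ as a sum of squares of $k\times k$ minors of $S(\Gamma)$ (via the Cauchy--Binet identity applied to $S^2$) and then compares these minor sums to those of $S(K_n)$ by a direct combinatorial/algebraic argument. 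That determinantal step is the missing idea in your outline.
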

Ghorbani \cite{Gho} proved the Haemers' conjecture in the case $\det(S)\geq n-1$ and very recently, Akbari, Einollahzadeh, Karkhaneei and Nematollah \cite{Akbari1} finished the proof of the conjecture. Ghorbani \cite[p.194]{Gho} also conjectured that the fraction of graphs on $n$ vertices with $|\det S|<n-1$ goes to $0$ as $n$ tends to infinity. This conjecture was also recently proved by Rizzolo \cite{Rizzolo}.

It is known that if $\Gamma$ has even order, then its Seidel matrix $S$ is full-rank. If a graph $\Gamma$ has odd order $n$, then $rank(S)\geq n-1$. There are examples such $C_5$ for example where $rank(S)=n-1$. Haemers \cite{Hae} posed the following problem which is still open to our knowledge.
\begin{prob}
If $rank(S)=n-1$, then there exists an eigenvector of $S$ corresponding to $0$ that has only $\pm 1$ entries ?
\end{prob}
Recently, Van Dam and Koolen \cite{DamKoo19} determined an infinitely family of graphs on $n$ vertices whose Seidel matrix has rank $n-1$ and their switching class does not contain a regular graph.

\section{Conclusions}

Spectral graph theory is a research field which has been very much investigated in the last 30--40 years. Our impression is that the study of the spectra of signed graphs is very far from the level of knowledge obtained with unsigned graphs. So the scope of the present note is to promote investigations on the spectra of signed graphs. Of course, there are many more problems which can be borrowed from the underlying spectral theory of (unsigned) graphs. Here we just give a few of them, but we have barely scratched the surface of the iceberg.

\section*{Acknowledgements} The authors are indebted to Thomas Zaslavsky for reading and revising with many comments an earlier draft of this note. The research of the first author was supported by the University of Naples Federico II under the project ``SGTACSMC''. The research of the second author was supported by the grants NSF DMS-1600768 and CIF-1815922. The research of the third author is partially supported by the National Natural Science Foundation of China (Grant No. 11471009 and Grant No. 11671376) and by Anhui Initiative in Quantum Information Technologies (Grant No. AHY150200). The research of the fourth author was supported by the National Natural Science Foundation of China (No. 11461054).

%\section*{References}

\end{document}